\newtheorem{theorem}{Theorem}[section]
\newtheorem{lemma}[theorem]{Lemma}
\newtheorem{proposition}[theorem]{Proposition}
\newtheorem{fact}[theorem]{Fact}
\theoremstyle{definition}
\newtheorem{definition}[theorem]{Definition}
\newtheorem{remark}[theorem]{Remark}
\def\tp{\operatorname{tp}}
\def\cL{\mathcal L}
\def\cR{\mathcal R}
\def\cU{\mathcal U}
\def\R{\mathbb R}
\def\N{\mathbb N}
\def\Z{\mathbb Z}
\def\Q{\mathbb Q}
\def\cbar{\bar{c}}
\def\ybar{\bar{y}}
\def\Th{\textnormal{Th}}
\def\st{\textnormal{st}}
\newcommand{\seq}{\subseteq}
\newcommand{\inv}{^{\text{-}1}}
\def\bbar{\bar{b}}
\def\ybar{\bar{y}}
\def\zbar{\bar{z}}
\newcommand{\claimqed}{\hfill$\dashv_{\text{\scriptsize{claim}}}$}
\renewcommand{\phi}{\varphi}
   \def\MR#1{}
\title{Pseudofinite proofs of the stable graph regularity lemma}
\author{G. Conant}
\address{Department of Mathematics\\
The Ohio State University\\
Columbus, OH, 43210, USA}
\email{conant.38@osu.edu}
\author{C. Terry}
\address{Department of Mathematics\\
The Ohio State University\\
Columbus, OH, 43210, USA}
\email{terry.376@osu.edu}
\date{August 31, 2023}
\begin{document}

\maketitle

\begin{abstract}
In this expository article, we give a detailed proof of a qualitative version of the Mallaris-Shelah regularity lemma for stable graphs \cite{MaShStab} using only basic  local stability theory  and an ultraproduct construction. This proof strategy was first established by Malliaris and Pillay \cite{MallPi}, and later simplified by Pillay in \cite{PiDR}. We provide some further simplifications, and also explain  how the pseudofinite approach can be used to obtain a qualitative  strengthening (in comparison to \cite{MallPi,PiDR}) in terms of ``functional error". To illustrate the extra leverage obtained by functional error, we give an elementary argument for extracting equipartitions from arbitrary partitions.
\end{abstract}

\section{Introduction}

In \cite{MaShStab}, Malliaris and Shelah  prove a strengthened version of Szemer\'{e}di's regularity lemma for the special class of stable finite graphs. Looking back over ten years later, one can see the deep impact this result has had on the flourishing field of interactions between model theory and finite combinatorics (e.g., \cite{ChSt, ChStNIP, CPT, CPTNIP, TeWo, TeWo2}).  To state their result, we first recall that a graph $(V,E)$ is \emph{$k$-stable} if there do not exist $v_1,\ldots,v_k,w_1,\ldots,w_k\in V$ such that $E(v_i,w_j)$ holds if and only if $i\leq j$. If $V$ is finite and  $X,Y\seq V$ are nonempty, then we define the edge density $d(X,Y)$ to be $|E\cap (X\times Y)|/|X\times Y|$ (viewing $E$ as a subset of $V\times V$). 

\begin{theorem}[Malliaris-Shelah]\label{thm:MS} 
Fix $k\geq 1$ and $\epsilon>0$. Then for any finite $k$-stable graph $(V,E)$, there is a partition $V=Y_1\cup\ldots\cup Y_n$, with $n\leq O_k((1/\epsilon)^{O_k(1)})$, such that for all $i,j\geq 1$, $||Y_i|-|Y_j||\leq 1$ and either $d(Y_i,Y_j)<\epsilon$ or $d(Y_i,Y_j)>1-\epsilon$.
\end{theorem}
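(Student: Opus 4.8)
The plan is to get the polynomial bound directly from the quantitative combinatorics of $k$-stability, organizing everything around a single well-chosen parameter set $A\seq V$, and only at the very end to pass to an equipartition. The two quantitative inputs I would isolate first are both consequences of the absence of half-graphs of length $k$: the set system $\{N(v):v\in V\}$ has VC dimension bounded by some $d=d(k)$ (a large shattered set would, via Ramsey, manufacture an order of length $k$), and correspondingly there is a constant $c_k$ so that for \emph{every} finite $A\seq V$ the number of distinct traces $\{N(v)\cap A:v\in V\}$ is at most $c_k|A|^{d}$. This polynomial-in-$|A|$ type count is the sole source of the stated bound: if I partition $V$ by the relation $u\sim v\iff N(u)\cap A=N(v)\cap A$ into trace-classes $\mathcal P=\{P_1,\dots,P_N\}$, then automatically $N\le c_k|A|^{d}$, so it suffices to produce an $A$ of size polynomial in $1/\epsilon$ for which $\mathcal P$ is density-dichotomous.

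To build $A$ I would sample. Choosing $A\seq V$ uniformly at random of size $m=\Theta_k(\epsilon^{-2}\log(1/\epsilon))$, bounded VC dimension makes the $\epsilon$-approximation (uniform convergence) theorem applicable, so with high probability $A$ simultaneously estimates, to additive error $\epsilon$, the densities $d(v,S)$ for all $v\in V$ and all $E$-definable sets $S$. In particular a good $A$ exists, and then $N\le c_k m^{d}=O_k((1/\epsilon)^{O_k(1)})$, which is exactly the required bound (the logarithmic factors are absorbed into the exponent). Up to this point I have used only that $E$ is NIP with the quantitative VC bound coming from $k$-stability.

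The heart of the proof, and the step I expect to be the main obstacle, is the density dichotomy: that $d(P_a,P_b)<\epsilon$ or $d(P_a,P_b)>1-\epsilon$ for all $a,b$ (including the diagonal $a=b$). This is the genuinely stable phenomenon, and it is where $k$-stability, not merely bounded VC dimension, is indispensable. The idea is that since all vertices of a class share a trace on $A$, and $A$ approximates all $E$-densities, the vertices of a class behave like realizations of one type; were some pair to have intermediate density, I would alternate between $P_a$ and $P_b$, using at each stage that their vertices are indistinguishable over the finitely many parameters of $A$, and greedily extract a half-graph of length $k$, contradicting stability. The delicate point is quantitative: one must ensure that the alternation survives the additive approximation error and really produces the full length-$k$ order rather than fizzling out after a few steps, and this is the place where the soft ultraproduct argument would ordinarily be invoked but must instead be made effective.

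Finally I would pass to an equipartition, and here I flag a real subtlety rather than pretend it is a one-line refinement. Blindly cutting each $P_a$ into equal blocks does \emph{not} preserve the dichotomy: a sparse pair whose few edges are arranged into a balanced bipartite pattern shows that intermediate density can appear at the sub-block scale even when every whole-class density is near $0$ or $1$, and this failure is purely combinatorial, owing nothing to stability. What rescues the construction is that each sub-block of $P_a$ is still trace-homogeneous over $A$, so the dichotomy argument of the previous paragraph applies to any pair of sub-blocks that remain above the approximation scale of $A$; running the entire construction from the start with $\epsilon/2$ in place of $\epsilon$ then leaves slack for one round of subdivision. Descending below that scale is precisely the hard content that the paper's ``functional error'' bookkeeping is designed to absorb, by tracking the error as a function of block size rather than as a single constant. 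I would therefore finish by invoking the elementary functional-error extraction of equipartitions: fix a block size $s=\lfloor|V|/M\rfloor$ with $M$ polynomial in $1/\epsilon$, cut each class into blocks of size $s$, and redistribute the fewer than $N$ leftover vertices one per block, so that all $n\le M+N$ resulting blocks differ in size by at most one while the dichotomy survives, with $n$ still polynomial in $1/\epsilon$.
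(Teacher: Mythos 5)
There is a genuine gap, and it sits exactly where you placed your placeholder. The density dichotomy for pairs of trace-classes is not a ``delicate quantitative point'' to be patched after the fact: it \emph{is} the theorem, and the mechanism you sketch does not run. Equality of traces on $A$ constrains a vertex's edges \emph{into $A$} and nothing else; the greedy extraction of a half-graph alternating between $P_a$ and $P_b$ must at each stage select fresh witnesses lying outside $A$, and about edges to such witnesses trace-equivalence over $A$ is silent, so the alternation never gets off the ground. What uniform convergence genuinely gives you (if you take $A$ to be an $\epsilon$-approximation for the bounded-VC family $\{N(u)\mathbin{\triangle} N(v):u,v\in V\}$, and note equal traces force $(N(u)\mathbin{\triangle} N(v))\cap A=\emptyset$) is that same-trace vertices have neighborhoods within $\epsilon|V|$ of each other globally; from this one can derive the dichotomy for pairs of classes of size much larger than $\epsilon|V|$ --- but that argument uses only NIP, and it is precisely the small classes that break. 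With $N=\mathrm{poly}(1/\epsilon)$ classes you cannot avoid classes below the approximation scale of $A$, and shrinking the sampling error to reach them increases $N$ faster than the error shrinks: you need $A$ to approximate sets defined from all of $A$, a circularity your proposal does not break. Since bounded VC dimension alone provably cannot yield ``all pairs homogeneous'' (NIP regularity must allow an $\epsilon$-fraction of irregular pairs), the point where $k$-stability must act below the sampling scale is the missing heart of the proof. In \cite{MaShStab} the polynomial bound does not come from sampling at all, but from an inductive construction of excellent/good sets via a tree of divisions whose height is bounded in terms of $k$ (the finite binary-tree characterization of stability); your proposal contains no substitute for this. (Minor point: no Ramsey is needed for the VC bound --- if $\{b_1,\dots,b_k\}$ is shattered by neighborhoods, the traces of the upward-closed sets $\{b_i,\dots,b_k\}$ directly produce a $\phi$-order of length $k$.)

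For the record, this paper never proves the statement as you set out to: Theorem \ref{thm:MS} with its polynomial bound is quoted from \cite{MaShStab} (a quantitative proof of the functional-error version is in \cite{TWgraphs}), and the paper proves only the qualitative Theorem \ref{thm:maindown}, via an ultraproduct, definability of $\phi$-types, the decomposition of a stable Keisler measure into a weighted sum of types, and the Symmetry Lemma \ref{lem:twosticks}; it then observes the two statements are interderivable \emph{without} bounds. This also shows why your final appeal to the paper's functional-error equipartition step is not available to you: in the paper, the goodness parameter $\gamma=\sigma(m)$ is chosen \emph{after} the number of parts $m$ is known and \emph{before} subdividing (Lemma \ref{lem:maindown} together with Remark \ref{rem:refinegood}), which is what licenses cutting parts into blocks of relative size $\epsilon/2m$ while retaining $\sigma(n)^2/4$-goodness. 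Your construction fixes a single constant approximation scale at the moment $A$ is sampled, before the number of classes is known, so one round of ``$\epsilon/2$ slack'' does not license descending to blocks of size $\Theta(\epsilon|V|/N)$ --- those sub-blocks fall below the approximation scale, which is the very regime you flagged as unresolved. (The redistribution of fewer than $N$ leftover vertices, one per block, is indeed harmless for densities; that was never the hard part.)
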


There are several key differences between Theorem \ref{thm:MS} and Szemer\'{e}di's regularity lemma. First, one obtains a polynomial (in $1/\epsilon$) bound on the size of the paritition, rather than the tower-type bounds necessary for arbitrary finite graphs \cite{GowSRL}. Second, \emph{all pairs} in the partition behave regularly, while for arbitrary graphs a small number of irregular pairs must be allowed. Finally, the density of each regular pair is close to $0$ or $1$  (we refer to this as ``almost homogeneity"), rather than behaving like random bipartite graphs of some arbitrary density in $[0,1]$. 

Shortly after \cite{MaShStab}, Malliaris and Pillay \cite{MallPi} proved a similar regularity lemma for stable formulas in arbitrary first-order structures with respect to any Keisler measure. When applying this result to a pseudofinite structure (with the normalized pseudofinite counting measure) one obtains a version of Theorem \ref{thm:MS} without equipartitions or quantitative bounds. The purpose of this article is to revisit the pseudofinite approach to stable graph regularity. We will give the complete proof, starting from the basic preliminaries in local stability. Our proof simplifies several model-theoretic and topological steps in some ways (motivated in part by recent work on stable  regularity for functions  by the first author, Chavarria, and Pillay \cite{CCP}; see also \cite{PiDR}). We also take a different combinatorial approach based on a ``symmetry lemma" for pairs of good sets (see Lemma \ref{lem:twosticks}). Finally, we will incorporate the regime of controlling irregularity with an arbitrary function of the size of the partition, rather than a constant $\epsilon>0$. In addition to providing a sharper qualitative statement, the functional error also allows one to obtain equipartitions in a  quick and straightforward way (compared to \cite{MaShStab}, which obtains equiparititions via a probabilistic argument).  We now state precisely what we will prove.

\begin{theorem}\label{thm:maindown}
Fix $k\geq 1$, $\epsilon>0$, and $\sigma\colon \N\to (0,1)$. Then for any finite $k$-stable graph $(V,E)$, there is a partition $V=Y_0\cup Y_1\cup\ldots\cup Y_n$, with $n\leq O_{k,\epsilon,\sigma}(1)$, such that $|Y_0|\leq \epsilon|V|$ and, for all $i,j\geq 1$, $|Y_i|=|Y_j|$ and either $d(Y_i,Y_j)< \sigma(n)$ or $d(Y_i,Y_j)> 1-\sigma(n)$.
\end{theorem}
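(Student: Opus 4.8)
The plan is to prove the contrapositive via an ultraproduct, reducing the quantitative statement to a qualitative one about a single pseudofinite graph, and then to recover the equipartition and functional error by a purely combinatorial subdivision at the very end. Suppose the theorem fails for some fixed $k$, $\epsilon$, and $\sigma$. Negating the bound $n\leq O_{k,\epsilon,\sigma}(1)$, we obtain for each $N\in\N$ a finite $k$-stable graph $(V_N,E_N)$ admitting no partition of the required form with at most $N$ parts. Fix a nonprincipal ultrafilter $\cU$ on $\N$ and form the ultraproduct $(V,E)=\prod_N (V_N,E_N)/\cU$, equipped with the normalized pseudofinite counting measure $\mu$ (a Keisler measure on the $x$-sort). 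Since $k$-stability of $E$ is expressed by a single first-order sentence, it is preserved under ultraproducts, so $\phi(x;y):=E(x,y)$ is a $k$-stable formula in $(V,E)$.

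Next I would run the pseudofinite stable regularity argument to cut $V$ into boundedly many homogeneous pieces. Working in a monster model and using only local stability --- definability of $\phi$-types over models, which splits the parameter space into boundedly many \emph{good} sets, together with the symmetry lemma for pairs of good sets (Lemma \ref{lem:twosticks}) --- one produces a partition $V=A_1\cup\ldots\cup A_m$ with $m\leq O_k(1)$ such that every pair $(A_i,A_j)$, including $i=j$, is homogeneous: its edge density with respect to the product measure is $0$ or $1$. Crucially the number $m$ of pieces depends only on $k$, coming from the stability constant rather than from any error parameter, and we may absorb any null pieces. By {\L}o\'s's theorem the defining formulas of the $A_i$ partition $\cU$-almost every $V_N$ into pieces $A_1^N,\ldots,A_m^N$, and for every prescribed $\delta>0$ the measure-homogeneity transfers to the statement that each pair $(A_i^N,A_j^N)$ of surviving pieces has density $<\delta$ or $>1-\delta$ (for $\cU$-almost all $N$, with the exceptional set depending on $\delta$).

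The final step is extracting an equipartition with functional error, and here I expect the argument to be clean rather than hard. Working inside a fixed large $V_N$, set the common block size $q:=\lfloor \epsilon|V_N|/m\rfloor$, cut each $A_i^N$ into $\lfloor|A_i^N|/q\rfloor$ disjoint blocks of size exactly $q$, collect all leftovers into $Y_0$, and list the blocks as $Y_1,\ldots,Y_n$. Then $|Y_0|<mq\leq\epsilon|V_N|$ and $n=\sum_i\lfloor|A_i^N|/q\rfloor\leq |V_N|/q\approx m/\epsilon$, so $n\leq O_{k,\epsilon}(1)$. For blocks $Y_a\seq A_i^N$ and $Y_b\seq A_j^N$, monotonicity of edge counts gives $d(Y_a,Y_b)\leq\delta\,|A_i^N|\,|A_j^N|/q^2\leq\delta(|V_N|/q)^2\approx\delta(m/\epsilon)^2$ in the sparse case, and symmetrically $d(Y_a,Y_b)\geq 1-\delta(m/\epsilon)^2$ in the dense case; the within-part and diagonal pairs are handled by the $i=j$ case of homogeneity. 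Since $n$ is bounded by a constant $N_0=N_0(k,\epsilon)$, the value $\sigma(n)$ is bounded below by $\sigma_0:=\min_{1\leq j\leq N_0}\sigma(j)>0$, so choosing $\delta$ in the transfer step small enough that $\delta(m/\epsilon)^2<\sigma_0\leq\sigma(n)$ forces each pair into the required window. This yields a valid partition of $V_N$ with at most $N_0<N$ parts for all large $N\in\cU$, contradicting the choice of $(V_N,E_N)$.

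The main obstacle is the middle step: establishing the measure-exact homogeneity of pairs of good sets with a bound on $m$ depending only on $k$. This is the heart of stable regularity and rests on the local stability inputs (type definability and the symmetry lemma); once it is in hand, the functional error works \emph{for} us rather than against us, since it lets us fix $\delta$ only after $n$ --- and hence the allowable error $\sigma(n)$ --- has been determined. The remaining points are routine: that $k$-stability and edge densities are expressible so {\L}o\'s applies, that the subdivision makes sense for $|V_N|$ large (so $q\geq 1$ and the floors behave), and that the inflation factor $(|V_N|/q)^2$ is genuinely bounded, all of which follow from $q$ being a fixed proportion of $|V_N|$.
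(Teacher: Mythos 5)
Your outer architecture (the compactness/ultraproduct reduction, then a purely combinatorial subdivision into equal-size blocks at the very end) matches the paper's, and your final step --- cutting each piece into blocks of size $q=\lfloor\epsilon|V_N|/m\rfloor$, bounding $n\leq|V_N|/q$, and only then choosing the homogeneity parameter small enough to beat $\min_{j\leq N_0}\sigma(j)$ --- is essentially the same quantifier dance the paper performs in deducing Theorem \ref{thm:maindown} from Lemma \ref{lem:maindown}. But the middle step, which you yourself flag as the heart of the matter, is false as stated, in two ways. First, the number of pieces $m$ cannot be bounded in terms of $k$ alone: a disjoint union of $r$ equal cliques is $k$-stable for a fixed small $k$ independent of $r$, yet any partition in which all pairs (including diagonal pairs) have density near $0$ or $1$ must essentially refine the clique decomposition outside a small exceptional set, so $m\geq\Omega(r)$. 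In the correct statement (Theorem \ref{thm:mainUP}), $m$ depends on $\epsilon$ and on the measure. Second, exact $0$/$1$ densities with no exceptional set are not achievable: by Theorem \ref{thm:KMstab} the relevant Keisler measure is a countable --- in general genuinely infinite --- weighted sum of types $\sum_i\alpha_ip_i$, so any finite definable partition has a piece containing at least two point masses, and such a piece cannot be exactly homogeneous against parameters separating those types. This is precisely why Theorem \ref{thm:mainUP} (i) truncates the sum at $m$ types carrying mass $>1-\epsilon$, producing an exceptional set $X_0$ with $\mu(X_0)<\epsilon$ that survives all the way into $Y_0$ (the paper notes that in the functional-error regime this exceptional set seems unavoidable, except in special situations such as groups where the measure is a finite sum of types), and (ii) asserts only that the pieces are $\gamma$-good for every $\gamma>0$, not exactly homogeneous --- the essential point being that $m$ is fixed \emph{before} $\gamma$, which is what licenses functional error.

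The repair is to replace your middle claim with exactly that statement: for all $\epsilon>0$ there is $m$ such that for all $\gamma>0$ there is a definable partition $M=X_0\cup X_1\cup\ldots\cup X_m$ with $\mu(X_0)<\epsilon$ and each $X_i$ ($i\geq1$) $\gamma$-good; this follows from the type decomposition together with regularity of the Borel measure on $S_\phi(M)$, shrinking clopen neighborhoods $K_i\ni p_i$ so that $\mu(K_i)<\alpha_i/(1-\gamma)$. Your transfer and subdivision then go through with minor bookkeeping: $Y_0$ must also absorb $X_0^N$ (so run the structure theorem with $\epsilon/2$), and since the model-theoretic output is goodness of single pieces rather than exact homogeneity of pairs, you must either pass goodness to the blocks via Remark \ref{rem:refinegood} and finish with the Symmetry Lemma (Lemma \ref{lem:twosticks}) as the paper does, or first convert goodness into pair-homogeneity via Lemma \ref{lem:twosticks} and then run your density-inflation argument; both routes work once the exceptional set and the $\gamma$-goodness are in place.
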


Let us compare and contrast the two theorems above. First, it is important to note that the pseudofinite approach to proving asymptotic results about finite graphs \emph{provides no quantitative information on bounds}. This is a significant drawback, since the polynomial bound obtained in Theorem \ref{thm:MS} is one of the hallmark features of so-called ``tame regularity". Thus we emphasize that the focus of this article is \emph{entirely qualitative}. A quantitative proof of Theorem \ref{thm:maindown} is given by the second author and Wolf in \cite{TWgraphs}.

If $\sigma$ is a constant error function (e.g., $\sigma(n)=\epsilon$ for all $n\in\N$), then Theorem \ref{thm:maindown} is qualitatively  equivalent to Theorem \ref{thm:MS}. Indeed, Theorem \ref{thm:MS} implies  Theorem \ref{thm:maindown} (with constant $\sigma$) since one can obtain $|Y_i|=|Y_j|$ for $i,j\geq 1$, rather than $||Y_i|-|Y_j||<1$, by choosing $Y_0$ of size at most $n$. Conversely, given Theorem \ref{thm:maindown}, one can distribute $Y_0$ among the sets $Y_1,\ldots,Y_n$ as evenly as possible in order to (qualitatively) deduce  Theorem \ref{thm:MS} (this involves small calculation and change in $\epsilon$, which we leave to the reader). However, in order to formulate Theorem \ref{thm:maindown} with an error \emph{function} as we have done, the exceptional set $Y_0$ of constant error size  seems to be necessary.

The outline of this article is as follows. Section \ref{sec:prelim} provides complete proofs of the necessary ingredients from local stability theory, starting from ``first principles" in model theory. In particular, we prove the characterization of stable formulas in terms of definability of types, and that any local Keisler measure obtained from a stable formula can be written as a countable weighted sum of types. In Section \ref{sec:main}, we prove Theorem \ref{thm:maindown}. We will aim to provide all of the necessary details for those readers having no previous experience with combinatorial proofs via an ultraproduct construction. For example, we will provide complete details on formulating the pseudofinite normalized counting measure with a first-order language (Section \ref{sec:pcm}). In Section \ref{sec:more}, we make a few further remarks. In particular, as our approach to Theorem \ref{thm:maindown} differs  from that of Malliaris and Pillay  \cite{MallPi}, we provide the details of their result. Finally, we discuss (without any proofs) the corresponding stable \emph{arithmetic} regularity lemma for subsets of groups. The motivation for this discussion is that in the group setting the exceptional set $Y_0$ disappears and, moreover, one can provide a concrete model-theoretic explanation for why this occurs. 

This article is based on (but not exactly the same as) two lectures given by the first author at the Thematic Program on Trends in Pure and Applied Model Theory, held at the Fields Institute in the fall of 2021. Several of the key combinatorial insights are taken from a graduate course taught by the second author at the University of Chicago in the fall of 2019. 

\subsection*{Acknowledgements}
Both authors thank the Fields Institute for Research in Mathematical Sciences for their support and hospitality. Thanks also to Maryanthe Malliaris for many helpful discussions about stable regularity.  Conant was partially supported by NSF grant DMS-2204787, as well as a Simons Distinguished Visitorship held by Anand Pillay. Terry was partially supported by NSF grant DMS-2115518.

\section{Preliminaries from local stability}\label{sec:prelim}

Throughout this section, we fix a complete first-order theory $T$ with language $\cL$. Since we will be focusing on the ``local theory" of a stable formula, we may assume $\cL$ is countable. We also assume all models of $T$ are infinite. We start with a  review of the basic terminology and definitions regarding local type spaces and stability. 

 Let $\phi(x,y)$ be a fixed $\cL$-formula whose free variables have been partitioned into two tuples $x$ (the ``object variables") and $y$ (the ``parameter variables"). Let $\phi^*(y,x)$ denote the ``dual formula", which is identical to $\phi(x,y)$, but comes with the assignments of object and parameter variables exchanged. 

Fix $M\models T$. A \emph{$\phi$-formula over $M$} is a finite Boolean combination of instances $\phi(x,b)$ where $b\in M^y$. A \emph{complete $\phi$-type over $M$} is a maximal consistent set of $\phi$-formulas over $M$. We let $S_\phi(M)$ denote the collection of complete $\phi$-types over $M$. Note that any $p\in S_\phi(M)$ is  determined by its behavior on instances $\phi(x,b)$ for $b\in M^y$. We say that $p$ is \emph{definable (over $M$)} if this behavior is given by a formula, i.e.,  if there is an $\cL_M$-formula $\psi(y)$ such that, for any $b\in M^y$, $\phi(x,b)\in p$ if and only if $M\models\psi(b)$. Finally recall  that $S_\phi(M)$ is a \emph{Stone space} (i.e., totally disconnected, compact, and Hausdorff)  where the basic clopen sets are of the form
\[
[\psi(x)]\coloneqq \{p\in S_\phi(M):\psi(x)\in p\}
\]
for some $\phi$-formula $\psi(x)$ over $M$. 

Given an integer $k\geq 1$, we say that $\phi(x,y)$ is \emph{$k$-stable in $T$} if there is no $\phi$-order of length $k$ in some/any model of $T$, i.e., 
\[
T\models\neg\exists x_1\ldots x_k,y_1\ldots y_k\bigwedge_{i,j\leq k}(\phi(x_i,y_j)\leftrightarrow i\leq j)
\]
(note that we are abusing syntax in the above first-order sentence for sake a readability).
The formula $\phi(x,y)$ is \emph{stable in $T$} if it is $k$-stable in $T$ for some $k\geq 1$.

Next we prove a fundamental result about the relationship between stability, counting types, and definability of types. This theorem was first proved by Shelah, although the argument we give is due to Hrushovski and Pillay (see the remarks following the proof for further clarification).

\begin{theorem}\label{thm:stabchar}
Given a formula $\phi(x,y)$, the following are equivalent.
\begin{enumerate}
\item[$(i)$] $\phi(x,y)$ is stable in $T$. 
\item[$(ii)$] For any $M\models T$, $|S_\phi(M)|\leq |M|$. 
\item[$(iii)$] For any $M\models T$, every type $p\in S_\phi(M)$ is definable. In particular, there is some $\phi^*$-formula $\psi(y)$ over $M$ such that, given $b\in M^y$, we have $\phi(x,b)\in p$ if and only if $M\models\psi(b)$.
\end{enumerate}
\end{theorem}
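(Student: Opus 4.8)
The plan is to prove the equivalences cyclically as $(i)\Rightarrow(iii)\Rightarrow(ii)\Rightarrow(i)$. The arrangement is convenient because $(iii)\Rightarrow(ii)$ reads the counting bound directly off of the defining formulas, $(ii)\Rightarrow(i)$ is a clean contrapositive, and $(i)\Rightarrow(iii)$ carries essentially all of the content. The ``in particular'' refinement of $(iii)$ (that the defining formula can be taken to be a $\phi^*$-formula) is exactly what makes the counting direction come out to the sharp bound $|S_\phi(M)|\le|M|$.

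The two easy directions run as follows. For $(iii)\Rightarrow(ii)$, assume each $p\in S_\phi(M)$ is definable by a $\phi^*$-formula $\psi_p(y)$ over $M$. Since a complete $\phi$-type is determined by which instances $\phi(x,b)$ it contains, and $\psi_p$ records precisely this information, the assignment $p\mapsto\psi_p$ is injective modulo logical equivalence over $M$; as $\cL$ is countable and $M$ is infinite there are only $|M|$ many $\cL_M$-formulas up to equivalence, giving $|S_\phi(M)|\le|M|$. For $(ii)\Rightarrow(i)$ I argue the contrapositive: if $\phi$ is unstable it admits $\phi$-orders of every finite length, so by compactness there is a $\phi$-order $(a_q,b_q)_{q\in\Q}$ with $\models\phi(a_q,b_{q'})$ iff $q\le q'$. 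Let $M$ be any countable model containing all the $b_q$. For each Dedekind cut $(L,R)$ of $\Q$ the partial type $\{\phi(x,b_q):q\in R\}\cup\{\neg\phi(x,b_q):q\in L\}$ is finitely satisfiable, hence extends to a complete $\phi$-type over $M$, and distinct cuts disagree on some $\phi(x,b_q)$. This yields $2^{\aleph_0}$ distinct types over the countable $M$, contradicting $(ii)$.

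The heart is $(i)\Rightarrow(iii)$. Assuming $\phi$ is $k$-stable, I would introduce the local $2$-rank $R$ on $\phi$-formulas, where $R(\chi)\ge n+1$ means some instance $\phi(x,b)$ splits $\chi$ into two pieces $\chi\wedge\phi(x,b)$ and $\chi\wedge\neg\phi(x,b)$ of rank $\ge n$, and show that $k$-stability bounds $R$ by a finite $N=N(k)$ (an infinite binary splitting tree would encode arbitrarily long $\phi$-orders). Given $p\in S_\phi(M)$, choose a single $\phi$-formula $\psi_0\in p$ achieving the minimal rank $r=R(p)=\min\{R(\theta):\theta\in p\}$, which exists since ranks are bounded integers. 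For each $b\in M^y$ the identity $\psi_0\equiv(\psi_0\wedge\phi(x,b))\vee(\psi_0\wedge\neg\phi(x,b))$ shows that at most one of the two pieces can have rank $r$ (otherwise $R(\psi_0)\ge r+1$), while the piece lying in $p$ has rank exactly $r$; hence $\phi(x,b)\in p$ iff $R(\psi_0\wedge\phi(x,b))\ge r$. The standard definability-of-rank lemma (induction on $n\le N$ via compactness) then shows that $\{b:R(\psi_0\wedge\phi(x,b))\ge r\}$ is cut out by an $\cL_M$-formula, so $p$ is definable.

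I expect the main obstacle to be upgrading this defining formula to a genuine $\phi^*$-formula, as the rank-based definition is an arbitrary $\cL_M$-formula whose parameters need not be object-tuples of $M$. To handle this I would use that $\phi^*$ is itself $k^*$-stable and that the set $\{b\in M^y:\phi(x,b)\in p\}$ is the trace on $M^y$ of the externally $\phi$-definable set $\{b:\models\phi(c,b)\}$ for a realization $c$ of $p$ in a suitably saturated extension. Stability of $\phi^*$ bounds the alternation of such a set along $\phi^*$-instances, and a Ramsey-plus-compactness argument should then express it as a finite Boolean combination of instances $\phi(a,y)$ with $a\in M^x$. This dual-symmetry step, reconciling the $M$-definability already obtained with the demand that the parameters come from $M^x$ on the object side, is the delicate point I would expect to require the most care.
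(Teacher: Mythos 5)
Your directions $(iii)\Rightarrow(ii)$ and $(ii)\Rightarrow(i)$ are correct and essentially identical to the paper's (the paper phrases the second via an $\R$-indexed order and a countable submodel containing the $\Q$-indexed parameters, which is the same Dedekind-cut count). The issues are concentrated in $(i)\Rightarrow(iii)$, where you take a rank-based route instead of the paper's argument, and two genuine gaps remain.

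First, your claim that $k$-stability bounds the local $2$-rank rests on the assertion that an infinite binary splitting tree ``would encode arbitrarily long $\phi$-orders.'' That tree-to-order implication is a real theorem (Shelah via infinitary methods, Hodges via a finitary combinatorial argument with explicit bounds), not an observation; the paper explicitly remarks that its chosen proof is designed to \emph{avoid} this step, which is only needed for definability over arbitrary parameter sets rather than models. Note also that within your cyclic scheme you cannot shortcut this by saying ``an infinite tree yields $2^{\aleph_0}$ branches, hence too many types, hence instability,'' since that uses $(i)\Rightarrow(ii)$, which is downstream of the implication you are proving. Second, and more seriously, your rank argument only produces an $\cL_M$-definition of $p$ (the formula expressing $R(\psi_0\wedge\phi(x,b))\geq r$), whereas $(iii)$ demands a $\phi^*$-formula, i.e., a Boolean combination of instances $\phi(a,y)$ with $a\in M^x$. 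You correctly flag this as the delicate point, but the proposed repair (``stability of $\phi^*$ bounds the alternation\ldots a Ramsey-plus-compactness argument should then express it\ldots'') is not an argument; bounded alternation along $\phi^*$-instances does not by itself convert an externally definable trace into a \emph{finite} Boolean combination of such instances. The paper sidesteps both gaps at once with the Hrushovski--Pillay construction: using finite satisfiability of $p$ in the model $M$, one builds $a_1,\ldots,a_{2k}\in M^x$ so that $\phi(x,b)\in p$ iff $M\models\phi(a_t,b)$ for at least $k$ indices $t$, and the definition $\bigvee_{X\in\binom{[2k]}{k}}\bigwedge_{t\in X}\phi(a_t,y)$ is manifestly a positive $\phi^*$-formula of uniform shape. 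If you want to keep the rank approach, you must either prove the tree-to-order lemma and supply a genuine argument for the $\phi^*$-form of the definition, or weaken $(iii)$ to mere $\cL_M$-definability (which still suffices for the counting in $(ii)$ over a countable language, but loses the ``in particular'' clause that the paper later uses, e.g., in its Theorem \ref{thm:MP}).
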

\begin{proof}
$(iii)\Rightarrow (ii)$. If $(iii)$ holds then each $p\in S_\phi(M)$ is completely determined by the formula $\psi(y)$, and there are at most $|M|$ such formulas.

$(ii)\Rightarrow (i)$.  Suppose $(i)$ fails. By compactness, there is a model $N\models T$ containing $\R$-indexed sequences $(a_i)_{i\in \R}$ and $(b_i)_{i\in\R}$ such that $N\models\phi(a_i,b_j)$ if and only if $i\leq j$. Let $M\prec N$ be a countable submodel containing $(b_i)_{i\in\Q}$. Then for $i<j$ from $\R$, we have $\tp_\phi(a_i/M)\neq \tp_\phi(a_j/M)$ since $N\models\phi(a_i,b_k)\wedge\neg\phi(a_j,b_k)$ for any $k\in\Q$ with $i<k<j$.  So $|S_\phi(M)|>|M|$.  

$(i)\Rightarrow (iii)$. Assume $\phi(x,y)$ is $k$-stable in $T$.  Fix $M\models T$ and $p\in S_\phi(M)$. We will construct elements $a_1,\ldots,a_{2k}\in M^x$ such that, for any $b\in M^y$, $\phi(x,b)\in p$ if and only $M\models \phi(a_t,b)$  for at least $k$ indices $t\in [2k]$.\footnote{Given an integer $n\geq 1$, we let $[n]=\{1,\ldots,n\}$.} In particular, in the statement of $(iii)$ we can take $\psi(y)$ to be the formula
\[
\bigvee_{X\in \binom{[2k]}{k}}\bigwedge_{t\in X}\phi(a_t,y).
\]

We start the construction with an arbitrary $a_1\in M^x$. Suppose we have constructed $a_1,\ldots,a_n$ for some fixed $n<2k$. Let $I_n$ denote the collection of sets $X\seq [n]$ such that, for some $b\in M^y$, we have $\neg\phi(x,b)\in p$ and  $M\models \phi(a_t,b)$ for all $t\in X$. For each $X\in I_n$, fix some $b^X_n\in M^y$ witnessing that $X$ satisfies the definition of $I_n$.  Dually, let $J_n$ denote the collection of sets $Y\seq [n]$ such that, for some $c\in M^y$, we have $\phi(x,c)\in p$ and  $M\models \neg\phi(a_t,c)$ for all $t\in Y$. For each $Y\in J_n$, fix a witness $c^Y_n$. We let $D$ denote the set of all such witnesses from  all previous stages $m\leq n$, i.e.,
\[
D=\{b^X_m:m\leq n,~X\in I_m\}\cup\{c^Y_m:m\leq n,~Y\in J_m\}.
\]
Note that $D$ is a finite subset of $M^y$. Since $p$ is finitely satisfiable in $M$, we may choose some $a_{n+1}\in M^x$ such that, for all $d\in D$, $M\models \phi(a_{n+1},d)$ if and only if $\phi(x,d)\in p$. By construction, we have the following properties:
\begin{enumerate}
\item[$(a)$] If $m\leq n$ and $X\in I_m$ then $M\models \neg\phi(a_{n+1},b^X_m)$.
\item[$(b)$] If $m\leq n$ and $Y\in J_m$ then $M\models\phi(a_{n+1},c^Y_n)$.
\end{enumerate}
This finishes the construction of $a_1,\ldots,a_{2k}$. 

Now fix $b\in M^y$. We want to show that $\phi(x,b)\in p$ if and only if $M\models\phi(a_t,b)$ for at least $k$ indices $t\in [2k]$. 

First, suppose there are $t_1<\ldots<t_k$ from $[2k]$ such that $M\models \phi(a_{t_i},b)$ for all $i\in [k]$. Toward a contradiction, suppose $\neg\phi(x,b)\in p$. Then for each $j\in [k]$, the set $X_j=\{t_1,\ldots,t_j\}$ is in $I_{t_j}$, and so we have the witness $b_j\coloneqq b^{X_j}_{t_j}$. If $i\leq j$ then $t_i\in X_j$, and so $M\models \phi(a_{t_i},b_j)$ by choice of $b_j$. On the other hand, if $i>j$ then we have $t_j\leq t_i-1$ and $X_j\in I_{t_j}$, which implies $M\models\neg \phi(a_{t_i},b_j)$ (apply $(a)$ above with $m=t_j$ and $n=t_i-1$). Altogether, given $i,j\in [k]$, we have $M\models\phi(a_{t_i},b_j)$ if and only if $i\leq j$, which contradicts $k$-stability of $\phi(x,y)$. 

Finally, suppose $M\models\phi(a_t,b)$ for at most $k-1$ indices $t\in [2k]$. Then there are $t_1<\ldots<t_{k+1}$ from $[2k]$ such that $M\models\neg\phi(a_{t_i},b)$ for all $i\in [k+1]$. Toward a contradiction, suppose $\phi(x,b)\in p$. Using an argument analogous to the previous case, one constructs $c_1,\ldots,c_{k+1}$ such that $M\models\neg\phi(a_{t_i},c_j)$ if and only if $i\leq j$. For $i\in [k]$, set $a'_i=a_{t_{k+2-i}}$ and $c'_i=c_{k+1-i}$. Then $M\models\phi(a'_i,c'_j)$ if and only if $i\leq j$, contradicting the choice of $k$. 
\end{proof}

\begin{remark}
The previous result is a special case of the Unstable Formula Theorem, first proved by Shelah \cite[Theorem II.2.2]{Shbook}. Our formulation of this theorem focuses on $\phi$-types over models, rather than over arbitrary  sets, in which case the proof is simpler (see the next remark for further discussion). The proof above (more specifically, the proof of $(i)\Rightarrow (iii)$) is due to Hrushovski and Pillay; see  \cite[Lemma 2.2]{Pibook} and/or \cite[Lemma 5.4]{HrPiGLF}. 
In part $(iii)$, the definition $\psi(y)$ for $p$ is a positive Boolean combination of instances $\phi(a,y)$ with $a\in M^x$. Moreover, the Boolean form is uniform across all types and models. In particular, if $\phi(x,y)$ is $k$-stable in $T$ and $\psi(y,x_1,\ldots,x_{2k})$ denotes the formula $\bigvee_{X\in \binom{[2k]}{k}}\bigwedge_{t\in X}\phi(x_t,y)$, then for any $M\models T$ and $p\in S_\phi(M)$ there are $a_1,\ldots,a_{2k}$ such that, for any $b\in M^y$, $\phi(x,b)\in p$ if and only if $M\models\psi(b,\bar{a})$. 
\end{remark}

\begin{remark}
If $\phi(x,y)$ is stable in $T$ then definability of types actually holds over arbitrary sets of parameters. In particular, for any $M\models T$ and $A\seq M$, any type $p\in S_\phi(A)$ is definable over $A$. However, note  the proof of $(i)\Rightarrow(iii)$ uses the fact that $M$ is a model when constructing $a_{n+1}$, and thus the argument does not generalize to types over arbitrary sets. Instead, the proof over arbitrary sets uses the characterization of stability of $\phi(x,y)$ in terms of a finite bound on the length of a certain binary tree built using instances of $\phi(x,y)$. This characterization was first proved by Shelah \cite[Chapter II]{Shbook} using infinitary methods, and then later Hodges \cite{Hodtrees} gave a finitary combinatorial proof with an explicit quantitative relationship between the stability of  $\phi(x,y)$ and the height of the binary tree.
\end{remark}

\begin{remark}
It follows from the proof of Theorem \ref{thm:stabchar} that to check stability of $\varphi(x,y)$ in $T$, it suffices to establish that $(ii)$ or $(iii)$ holds for any \emph{countable} $M\models T$.
\end{remark}

We now turn to measures.

\begin{definition}
Given $M\models T$, a \textbf{Keisler $\phi$-measure over $M$} is a finitely additive probability measure on the Boolean algebra of $\phi$-formulas over $M$, i.e., a function $\mu$ from $\phi$-formulas over $M$ to the interval $[0,1]$ satisfying the following axioms:
\begin{enumerate}
\item[$\ast$] $\mu(\theta(x))=0$ if $\theta(x)$ is inconsistent,
\item[$\ast$] $\mu(\neg\theta(x))=1-\mu(\theta(x))$, 
\item[$\ast$] $\mu(\theta_1(x)\vee\theta_2(x))=\mu(\theta_1(x))+\mu(\theta_2(x))-\mu(\theta_1(x)\wedge \theta_2(x))$.
\end{enumerate}
\end{definition}

Note  that if two $\phi$-formulas over $M$ are equivalent (modulo $T$ with constants for $M$), then any Keisler $\phi$-measure will assign them the same value. Thus one can also view a Keisler $\phi$-measure as a map on $\phi$-definable subsets of $M^x$. 

Recall that the space of ultrafilters (or types) over a Boolean algebra is a Stone space. Moreover any Stone space $X$ can be represented as the space of ultrafilters over the Boolean algebra of clopen sets in $X$. Via this correspondence (also called ``Stone duality"), one obtains a connection between finitely additive probability measures on Boolean algebras and regular Borel probability measures on Stone spaces. We state this connection now in the setting of Keisler measures. A proof can be found in \cite[Section 7.1]{Sibook}; see also \cite[Proposition 416Q]{Fremv4}.

\begin{fact}
Given a Keisler $\phi$-measure $\mu$ over $M\models T$, there is a unique regular Borel probability measure $\hat{\mu}$ on $S_\phi(M)$ such that given a $\phi$-formula $\theta(x)$ over $M$, 
\[
\mu(\theta(x))=\hat{\mu}([\theta(x)]).
\]
Moreover, the map $\mu\mapsto \hat{\mu}$ is a bijection between Keisler $\phi$-measures over $M$ and regular Borel probability measures on $S_\phi(M)$.
\end{fact}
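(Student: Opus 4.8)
The plan is to treat this as an instance of the Riesz--Markov--Kakutani representation theorem, using the fact that $\mu$ already determines its values on every clopen subset of the Stone space $S_\phi(M)$. First I would record the dictionary between $\phi$-formulas and clopen sets. Since the $\phi$-formulas are closed under finite Boolean operations, so are the basic clopen sets $[\psi(x)]$, and in fact \emph{every} clopen subset of $S_\phi(M)$ has the form $[\theta(x)]$ for some $\phi$-formula $\theta$: a clopen set is open, hence a union of basic clopens, and it is compact, so a finite such union, which is again of the form $[\theta]$. As equivalent $\phi$-formulas define the same clopen set, $\mu$ descends to a well-defined, finitely additive, $[0,1]$-valued set function on the Boolean algebra $\cB$ of clopen subsets of $S_\phi(M)$, with $\mu(S_\phi(M))=1$.

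The heart of the argument is to promote $\mu$ from $\cB$ to a genuine Borel measure, which I would do by building a positive linear functional on continuous functions. Let $A\seq C(S_\phi(M))$ be the algebra of locally constant real-valued functions, i.e.\ finite real linear combinations $\sum_i c_i\mathbf{1}_{[\theta_i]}$. Rewriting any such function in terms of a clopen partition of $S_\phi(M)$ and applying finite additivity of $\mu$ shows that
\[
\Lambda\Bigl(\sum_i c_i\mathbf{1}_{[\theta_i]}\Bigr)\coloneqq \sum_i c_i\,\mu(\theta_i)
\]
is well-defined and linear on $A$; positivity ($f\geq 0\Rightarrow\Lambda(f)\geq 0$) and the bound $|\Lambda(f)|\leq\|f\|_\infty$ (with $\Lambda(\mathbf{1})=1$) follow from the probability-measure axioms. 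Because $S_\phi(M)$ is a Stone space, distinct types are separated by some clopen set, so $A$ separates points and contains the constants; by Stone--Weierstrass, $A$ is dense in $C(S_\phi(M))$. Since $\Lambda$ is a bounded (hence uniformly continuous) functional of norm $1$ on $A$, it extends uniquely to a positive linear functional of norm $1$ on all of $C(S_\phi(M))$.

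Now I would invoke Riesz--Markov--Kakutani: there is a unique regular Borel probability measure $\hat\mu$ on $S_\phi(M)$ with $\Lambda(f)=\int f\,d\hat\mu$ for every $f\in C(S_\phi(M))$. Taking $f=\mathbf{1}_{[\theta(x)]}$ gives $\hat\mu([\theta(x)])=\mu(\theta(x))$, as required, and $\hat\mu$ is a probability measure since $\hat\mu(S_\phi(M))=\Lambda(\mathbf{1})=1$. For the ``moreover'' clause, injectivity of $\mu\mapsto\hat\mu$ is immediate, as $\hat\mu$ restricted to clopen sets recovers $\mu$; surjectivity is routine, since for any regular Borel probability measure $\nu$ the set function $\theta\mapsto\nu([\theta])$ satisfies the three Keisler axioms directly from the measure axioms and maps back to $\nu$.

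The uniqueness assertion in the first clause is the one point deserving care, and I would deduce it from the uniqueness half of Riesz--Markov: any competing regular Borel probability measure agreeing with $\mu$ on clopen sets agrees with $\hat\mu$ on the dense subalgebra $A$ and therefore induces the same functional $\Lambda$. The main obstacle I anticipate is precisely this interplay between the Boolean-algebra data and Borel regularity when $S_\phi(M)$ is not second countable (as happens for uncountable $M$), where the $\sigma$-algebra $\sigma(\cB)$ generated by the clopen sets need not be all of the Borel $\sigma$-algebra. Routing everything through $C(S_\phi(M))$ and the Riesz--Markov theorem is exactly what makes regularity and uniqueness fall out cleanly, rather than forcing a direct comparison of $\sigma(\cB)$ with the full Borel $\sigma$-algebra.
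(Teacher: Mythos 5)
Your proof is correct, and it is worth noting at the outset that the paper does not actually prove this Fact: it is stated with pointers to Simon's book and to Fremlin, so there is no in-paper argument to compare against. Your route via Stone--Weierstrass and Riesz--Markov--Kakutani is a complete and standard one. The key supporting steps all check out: every clopen subset of $S_\phi(M)$ is of the form $[\theta(x)]$ by compactness; equivalent $\phi$-formulas get the same $\mu$-value (the paper records this just after the definition of Keisler measure), so $\mu$ descends to the clopen algebra; $\Lambda$ is well defined and positive on locally constant functions after passing to a common clopen refinement; the locally constant functions form a point-separating unital subalgebra, hence are uniformly dense; and the uniqueness half of Riesz--Markov handles both the uniqueness clause and the verification that $\nu\mapsto(\theta\mapsto\nu([\theta]))$ inverts $\mu\mapsto\hat\mu$. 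You are also right that the only genuine subtlety is the gap between the $\sigma$-algebra generated by clopen sets and the full Borel $\sigma$-algebra when $M$ is uncountable, and routing through $C(S_\phi(M))$ is the clean way around it. For comparison, the cited references take the other standard route: on a Stone space a finitely additive measure on the clopen algebra is \emph{vacuously} countably additive (a compact clopen set cannot be partitioned into infinitely many nonempty clopen pieces), so Carath\'{e}odory extends it to the generated $\sigma$-algebra, and one then argues separately for the extension to all Borel sets and for regularity. Your approach outsources exactly those last two steps to the representation theorem, which is why it comes out cleaner; the Carath\'{e}odory route is more self-contained and makes the role of compactness of the Stone space more visible.
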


In light of the above fact, we identify $\mu$ and $\hat{\mu}$ when there is no possibility for confusion. We also write $\mu(p)$ for $\mu(\{p\})$, and we identify a type $p\in S_\phi(M)$ with its associated Dirac measure on $S_\phi(M)$. 

\begin{remark}\label{rem:meas-of-type}
Let $\mu$ be a Keisler $\phi$-measure over $M$. Then by regularity of $\mu$, and since $S_\phi(M)$ is a Stone space, it follows that for any closed set $C\seq S_\phi(M)$, we have
\[
\mu(C)=\inf\{\mu(K):K\seq S_\phi(M)\text{ clopen containing $C$}\}.
\]
We will primarily apply this when $C$ is a singleton $\{p\}$.
\end{remark}

Next we prove a decomposition theorem for Keisler measures obtained from stable formulas (originally due to Keisler \cite{Keis}). In particular, such a measure can be written as a countable weighted sum of of types (i.e, Dirac measures). While this result holds for measures over any model of $T$, we point out in Remark \ref{rem:shortKM} below that the proof for countable models is much shorter and, moreover, suffices to obtain  the stable graph regularity lemma (Theorem \ref{thm:maindown}).

\begin{theorem}\label{thm:KMstab}
Suppose $\phi(x,y)$ is stable and let $\mu$ be a Keisler $\phi$-measure over $M\models T$. Define $D=\{p\in S_\phi(M):\mu(p)>0\}$ (the set of ``point masses" for $\mu$). Then $D$ is countable and $\mu(D)=1$. Thus $\mu$ can be written as a countable weighted sum of types, namely, $\mu=\sum_{p\in D}\mu(p)p$. 
\end{theorem}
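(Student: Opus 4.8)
The plan is to prove the two assertions separately: first that $D$ is countable, and then that $\mu(D)=1$. The countability of $D$ is the easy part and should follow from a standard pigeonhole argument. Since $\mu$ is a probability measure, for each $n\geq 1$ the set $D_n=\{p\in S_\phi(M):\mu(p)>1/n\}$ can contain at most $n-1$ types (any $n$ distinct point masses of measure exceeding $1/n$ would have total measure exceeding $1$, since the singletons are disjoint and $\mu$ is finitely additive, hence monotone on finite disjoint unions). Therefore $D=\bigcup_{n\geq 1}D_n$ is a countable union of finite sets, hence countable. This argument uses nothing about stability.

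The substance of the theorem is the claim $\mu(D)=1$, and this is where stability must enter. The natural strategy is to show that $\mu$ places no mass outside the point masses, i.e., that $\mu(S_\phi(M)\setminus D)=0$. First I would use stability to control the size of $S_\phi(M)$: by Theorem \ref{thm:stabchar}, every $p\in S_\phi(M)$ is definable, and the defining formula $\psi(y)$ is a positive Boolean combination of instances $\phi(a_t,y)$ with $a_1,\ldots,a_{2k}\in M^x$ of the uniform form recorded in the first remark. Consequently, each type is determined by a tuple $\bar a\in (M^x)^{2k}$, so $|S_\phi(M)|\leq |M|$. The key point is that this bound on the \emph{number} of types is what forces the measure to concentrate on point masses. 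I would argue as follows: since $\mu$ is a finite measure and a finite measure can have only countably many atoms of positive mass (which is exactly the countability of $D$ above), the remaining mass lives on the non-atomic part. But on a set where $\mu$ is non-atomic, one can split into pieces of arbitrarily small positive measure, and iterating this would manufacture uncountably many types all carrying positive measure in a way incompatible with $|S_\phi(M)|\leq|M|$ together with a cardinality argument — however, this shape of argument is delicate for arbitrary $M$.

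Because the excerpt explicitly flags (in Remark \ref{rem:shortKM}) that the countable case is both shorter and sufficient, I would focus the cleanest version on \emph{countable} $M$, where $S_\phi(M)$ has only countably many types. In that case the argument becomes transparent: if $M$ is countable then $S_\phi(M)$ is countable, so $S_\phi(M)\setminus D$ is a countable set each of whose points has measure $0$. By countable additivity of the regular Borel measure $\hat\mu$ (or by Remark \ref{rem:meas-of-type}, writing each singleton as an infimum of clopen sets), we get $\mu(S_\phi(M)\setminus D)=\sum_{p\notin D}\mu(p)=0$, whence $\mu(D)=1$. The identity $\mu=\sum_{p\in D}\mu(p)\,p$ then follows by evaluating both sides on an arbitrary clopen set $[\theta(x)]$: the right-hand side gives $\sum_{p\in D,\,\theta(x)\in p}\mu(p)=\mu([\theta(x)]\cap D)=\mu([\theta(x)])$, using $\mu(D)=1$.

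The main obstacle is the general (uncountable $M$) case, where $S_\phi(M)$ need not be countable and one genuinely needs to rule out a non-atomic component of $\mu$ using only the bound $|S_\phi(M)|\leq |M|$. The difficulty is that $|S_\phi(M)|\leq|M|$ alone does not immediately preclude a non-atomic measure when $|M|$ is uncountable, so the honest proof must instead exploit definability more carefully — for instance, by pushing the problem down to a countable elementary submodel $M_0\prec M$ containing suitable parameters, checking that $\mu$ restricts to a Keisler $\phi$-measure there with the full mass on its point masses, and then transferring back. I expect that step to require the genuine work, and I would isolate it as the technical heart of the proof, while noting that for the intended application (Theorem \ref{thm:maindown}) the clean countable argument above already suffices.
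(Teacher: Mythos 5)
Your countability argument for $D$ and your treatment of the countable case are correct and match the paper (the latter is exactly Remark \ref{rem:shortKM}). But the theorem is stated for an arbitrary model $M$, and that is precisely the part you leave unproven: you explicitly defer ``the technical heart'' of showing $\mu(D)=1$ when $M$ is uncountable. Moreover, both routes you sketch for it have concrete obstructions. Your first idea --- splitting a non-atomic part into ever smaller pieces to ``manufacture uncountably many types all carrying positive measure'' --- cannot work as stated, because the types sitting at the ends of such a splitting process need not carry any positive measure (only the finite-stage pieces do), so you get no contradiction with the countability of $D$; the contradiction has to come from counting types, not from measure. Your second idea --- restricting $\mu$ to a countable $M_0\prec M$, applying the countable case there, and ``transferring back'' --- runs into the problem that a point mass of the restricted measure at some $q\in S_\phi(M_0)$ says nothing about how that mass distributes among the (possibly many) extensions of $q$ to types over $M$, so it does not produce a point mass in $S_\phi(M)$ without further work that you do not supply.

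The paper closes this gap with a single argument that combines your two ideas correctly. Assuming some closed $X$ with $\mu(X)>0$ and $X\cap D=\emptyset$ (closedness via regularity of $\hat\mu$), one builds a binary tree $(K_\eta)_{\eta\in 2^{<\omega}}$ of clopen sets with $\mu(K_\eta\cap X)>0$, $K_{\eta 0}\seq K_\eta$ and $K_{\eta 1}=K_\eta\setminus K_{\eta 0}$: since every $p\in X$ has $\mu(p)=0$, Remark \ref{rem:meas-of-type} gives each such $p$ a clopen neighborhood of measure less than $\mu(K_\eta\cap X)$, compactness of $K_\eta\cap X$ extracts a finite subcover, and finite additivity picks out a piece meeting $X$ in positive measure whose complement in $K_\eta$ still meets $X$ in positive measure. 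The $2^{\aleph_0}$ branches then yield pairwise inconsistent consistent partial $\phi$-types whose (countably many) defining formulas all lie over a single countable $M_0\preceq M$, so $|S_\phi(M_0)|>|M_0|$, contradicting Theorem \ref{thm:stabchar}$(ii)$. Note that the contradiction is with the type-counting bound over the \emph{countable submodel}, and the branch types are never required to have positive measure --- this is exactly the repair your sketch needs.
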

\begin{proof}
First note that by additivity of $\mu$, for any $\epsilon>0$ there are at most $1/\epsilon$ distinct types $p\in S_\phi(M)$ with $\mu(p)\geq\epsilon$. Taking a union over rational $\epsilon$, we see that $D$ is countable. So, in particular, $D$ is Borel. To show that $\mu(D)=1$, it suffices to prove the following claim. \medskip

\noindent\emph{Claim.} Suppose $X\seq S_\phi(M)$ is Borel and $\mu(X)>0$. Then $X\cap D\neq\emptyset$.

\noindent\emph{Proof.} By regularity of $\mu$, it suffices to assume $X$ is closed. Toward a contradiction, suppose $X\cap D=\emptyset$. We will construct a tree-indexed sequence $(K_\eta)_{\eta\in 2^{<\omega}}$ of clopen sets such that for all $\eta\in 2^{<\omega}$,  $\mu(K_\eta\cap X)>0 $, $K_{\eta0}\seq K_\eta$, and $K_{\eta 1}=K_\eta\backslash K_{\eta0}$. 

Let $K_\emptyset=S_\phi(M)$ (so $\mu(K_\emptyset\cap X)>0$ by assumption). Fix $\eta\in 2^{<\omega}$ and suppose we have constructed $K_\eta$. For any $p\in X$, we have $\mu(p)=0$ since $X\cap D=\emptyset$, and thus by Remark \ref{rem:meas-of-type} there is a clopen set $K_p$ containing $p$ with $\mu(K_p)<\mu(K_\eta\cap X)$. If $p\in K_\eta\cap X$, then we may further assume $K_p\seq K_\eta$.  Note that $K_\eta\cap X$ is closed, and hence compact. Thus there are finitely many $p_1,\ldots,p_n\in K_\eta\cap X$ such that $K_\eta\cap X\seq\bigcup_{t=1}^n K_{p_t}$. By additivity of $\mu$, there is some $t$ such that $\mu(K_{p_t}\cap X)>0$. Thus we can set $K_{\eta 0}=K_{p_t}$ and $K_{\eta1}=K_\eta\backslash K_{\eta0}$ (note that $\mu(K_{\eta1}\cap X)>0$ since $\mu(K_{\eta0})<\mu(K_\eta\cap X)$). 

Now choose a sequence $(\psi_\eta(x))_{\eta\in 2^{<\omega}}$ of $\phi$-formulas over $M$ such that $K_\eta=[\psi_\eta(x)]$. Then for any  $\sigma\in 2^\omega$, the partial $\phi$-type $\pi_\sigma=\{\psi_\eta(x):\eta\sqsubseteq\sigma\}$ is consistent, and if $\sigma\neq\sigma'$ then $\pi_\sigma$ and $\pi_{\sigma'}$ are mutually inconsistent. Choose a countable submodel $M_0\preceq M$ such that each $\psi_\eta(x)$ is over $M_0$. Then each $\pi_\sigma$ extends to a  type $p_\sigma\in S_\phi(M_0)$, which yields $|S_\phi(M_0)|>|M_0|$. By Theorem \ref{thm:stabchar}, this contradicts stability of $\phi(x,y)$.  
\end{proof}

\begin{remark}
The previous proof is based on a research note by Gannon \cite{GanStab}, which establishes a connection between stable Keisler measures and the Sobjeck-Hammer decomposition theorem from  the theory of charges.
The proof of the ``Claim" can be made a little shorter through the use of Cantor-Bendixson rank. However, to show that  $S_\phi(M)$ has a defined Cantor-Bendixson rank (assuming stability of $\phi(x,y)$), one usually carries out a similar construction of uncountably many $\phi$-types over a countable model via an infinite binary tree.  
\end{remark}

\begin{remark}\label{rem:shortKM}
When applying Theorem \ref{thm:KMstab} to prove stable graph regularity, it suffices to assume $M$ is countable (via Lowenheim-Skolem). In this case, the proof of  Theorem \ref{thm:KMstab} is much simpler. Indeed, if $M$ is countable then so is $S_\phi(M)$ by Theorem \ref{thm:stabchar}. Therefore any set $X\seq S_\phi(M)$ is Borel and, by countable additivity, $\mu(X)=\mu(X\cap D)=\sum_{p\in X\cap D}\mu(p)$.  
\end{remark}

\section{Stable graph regularity}\label{sec:main}

\subsection{Notation}
Given $m\in\N$ and a set $S$ of parameters, we write $m\leq O_S(1)$ to mean that $m$ is bounded by a constant depending only on the parameters in $S$.

Given a graph $(V,E)$, a set $X\seq V$, and a vertex $b\in V$, we let $E(X,b)$ denote the edge neighborhood of $b$, i.e., the set of $a\in X$ such that $E(a,b)$ holds. We  use $\neg E$ to denote the complement of $E$ (in $V\times V$), and the notation $\neg E(X,b)$ is defined in the same way.

Similarly, given an $\cL$-structure $M$, an $\cL$-formula $\phi(x,b)$ over $M$, and a subset $X\seq M^x$ (usually definable), we let $\phi(X,b)$ denote the set of $a\in X$ such that $M\models\phi(a,b)$. We frequently identify $\cL$-formulas over $M$ with definable sets in $M$. For example, given an $\cL$-formula $\phi(x,y)$, a $\phi$-type $p\in S_\phi(M)$, and a $\phi$-definable subset $X\seq M^x$, we will write ``$X\in p$" to mean that $p$ contains a $\phi$-formula over $M$ defining $X$.

\subsection{Good sets and the Symmetry Lemma}

The proof of Theorem \ref{thm:maindown} will combine an ultraproduct construction together with a model-theoretic result about a single (infinite) graph. This latter result will be formulated using  the notion of ``good sets", as defined by Malliaris and Shelah \cite{MaShStab}. We now recall that definition, formulated in the setting of an arbitrary measure.

\begin{definition}
Let $(V,E)$ be a graph, and suppose $\mu$ is a finitely additive probability measure on some Boolean algebra of subsets of $V$ containing $E(V,b)$ for all $b\in V$. Fix a $\mu$-measurable set $X\seq V$ and some $\epsilon>0$. Then $X$ is \textbf{$\epsilon$-good in $(V,E)$ with respect to $\mu$} if for all $b\in V$ either $\mu(E(X,b))<\epsilon\mu(X)$ or $\mu(E(X,b))>(1-\epsilon)\mu(X)$.
\end{definition}

In the previous definition, if $V$ is finite and $\mu$ is the normalized counting measure on $V$, then we will omit the clause ``with respect to $\mu$". This setting recovers the notion of good sets as defined in \cite{MaShStab}.  Next we show that in finite graphs, pairs of good sets are automatically almost homogeneous. This is a folklore  result, which is referred to as a ``symmetry lemma" in recent work of the second author and Wolf \cite[Lemma 5.9]{TWgraphs}. See also \cite[Lemma 27]{BBW} for a combinatorial result based on the same underlying principle.

\begin{proposition}\label{prop:twosticks}
Let $(V,E)$ be a finite graph. Fix $X,Y\seq V$ and $\alpha,\beta,\delta,\epsilon>0$. Define
 \begin{align*}
X_0 &= \{a\in X:|E(a,Y)|<\delta|Y|\},\text{ and}\\
Y_1 & = \{b\in Y:|E(X,b)|>(1-\epsilon)|X|\}
\end{align*}
Assume $|Y_1|\geq\alpha|Y|$ and $\delta(1-\beta)\leq\alpha(\beta-\epsilon)$. Then either $|X_0|<\beta|X|$ or $|X_0|>(1-\beta)|X|$.
\end{proposition}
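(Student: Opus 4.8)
The plan is to argue by contraposition: assuming that $|X_0|$ lands in the ``forbidden middle'' $\beta|X|\le |X_0|\le (1-\beta)|X|$, I will derive a contradiction with the hypothesis $\delta(1-\beta)\le\alpha(\beta-\epsilon)$ by double-counting the edges between $X_0$ and $Y_1$. After disposing of the degenerate cases (if $X=\emptyset$ or $Y=\emptyset$ the statement is trivial, and if $\beta\ge 1$ the conclusion is immediate), I note that since $\delta,\alpha>0$ and $0<\beta<1$ the hypothesis forces $\beta>\epsilon$; in particular, under the contraposition assumption we have $|X_0|\ge\beta|X|\ge\epsilon|X|$, which is exactly the sign condition needed below.

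The core of the argument is a two-sided estimate on $N\coloneqq|E\cap(X_0\times Y_1)|$. For the upper bound, each $a\in X_0$ satisfies $|E(a,Y)|<\delta|Y|$ by definition of $X_0$, and since $Y_1\seq Y$ this gives $N\le\sum_{a\in X_0}|E(a,Y)|<\delta|Y|\,|X_0|$. For the lower bound, each $b\in Y_1$ satisfies $|E(X,b)|>(1-\epsilon)|X|$, so $b$ has fewer than $\epsilon|X|$ non-neighbors in $X$, whence $|E(X_0,b)|\ge |X_0|-\epsilon|X|$. Summing over $Y_1$ and using $|Y_1|\ge\alpha|Y|$ together with $|X_0|-\epsilon|X|\ge 0$ yields $N\ge(|X_0|-\epsilon|X|)\,|Y_1|\ge(|X_0|-\epsilon|X|)\,\alpha|Y|$.

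Combining the two bounds and cancelling the positive factor $|X|\,|Y|$ (writing $\gamma=|X_0|/|X|$) gives $\alpha(\gamma-\epsilon)<\delta\gamma$. It then remains only to feed in the two inequalities defining the forbidden middle: from $\gamma\ge\beta$ we get $\alpha(\gamma-\epsilon)\ge\alpha(\beta-\epsilon)$, and from $\gamma\le 1-\beta$ we get $\delta\gamma\le\delta(1-\beta)$. Chaining these produces $\alpha(\beta-\epsilon)\le\alpha(\gamma-\epsilon)<\delta\gamma\le\delta(1-\beta)$, hence $\alpha(\beta-\epsilon)<\delta(1-\beta)$, which directly contradicts the hypothesis; the proposition follows.

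As for difficulty, there is no serious obstacle here: the entire content is the asymmetric double count, and the only points requiring care are the directions of the inequalities (in particular that the strict ``$<$'' from the upper bound survives the combination, which it does as long as $X_0\neq\emptyset$, guaranteed by $\gamma\ge\beta>0$) and the bookkeeping of the degenerate parameter ranges. The one mildly delicate step is recognizing that one should \emph{not} try to extract an explicit upper bound for $\gamma$ from $\alpha(\gamma-\epsilon)<\delta\gamma$, but instead bound $\delta\gamma$ from above using $\gamma\le 1-\beta$; this is precisely what makes the hypothesis $\delta(1-\beta)\le\alpha(\beta-\epsilon)$ the natural one and produces the clean dichotomy.
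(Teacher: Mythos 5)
Your proof is correct and follows essentially the same route as the paper's: assume $\beta|X|\le|X_0|\le(1-\beta)|X|$ and double-count $|E\cap(X_0\times Y_1)|$, bounding it below via the degrees of vertices in $Y_1$ and above via the degrees of vertices in $X_0$, then compare with the hypothesis $\delta(1-\beta)\le\alpha(\beta-\epsilon)$. Your added care about the sign of $|X_0|-\epsilon|X|$ and the degenerate parameter ranges is a minor (and welcome) refinement of the same argument.
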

\begin{proof}
For a contradiction, suppose $\beta|X|\leq |X_0|\leq (1-\beta)|X|$. If $b\in Y_1$ then, since $|X_0|\geq\beta|X|$ and 
\[
|\neg E(X_0,b)|\leq |\neg E(X,b)|<\epsilon|X|,
\]
it follows that $|E(X_0,b)|>(\beta-\epsilon)|X|$. Therefore 
\[
|E(X_0,Y_1)|=\sum_{b\in Y_1}|E(X_0,b)|>(\beta-\epsilon)|X||Y_1|\geq \alpha(\beta-\epsilon)|X||Y|.
\]
On the other hand, since $|X_0|\leq (1-\beta)|X|$, 
\[
|E(X_0,Y_1)|\leq |E(X_0,Y)|\leq \sum_{a\in X_0}|E(a,Y)|<\delta|X_0||Y|\leq \delta(1-\nu)|X|.
\]
So we conclude $\alpha(\beta-\epsilon)<\delta(1-\beta)$, which is a contradiction. 
\end{proof}

\begin{lemma}[Symmetry Lemma]\label{lem:twosticks}
Let $(V,E)$ be a finite graph, and suppose $X,Y\seq V$ are both $\epsilon^2/4$-good for $(V,E)$. Then either $d(X,Y)<\epsilon$ or $d(X,Y)>1-\epsilon$.
\end{lemma}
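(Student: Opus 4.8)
The plan is to reduce everything to Proposition~\ref{prop:twosticks} by choosing its four parameters correctly. Write $\delta=\epsilon^2/4$ for the goodness parameter (recall that since $V$ is finite and $\mu$ is the counting measure, $\delta$-goodness of $X$ means $|E(X,b)|<\delta|X|$ or $|E(X,b)|>(1-\delta)|X|$ for every $b\in V$). We may assume $\epsilon\leq 1$, as otherwise the dichotomy is automatic. First I would record the two decompositions coming from the hypotheses. By $\delta$-goodness of $X$, the ``columns'' $b\in Y$ split into the high set $Y_1=\{b\in Y:|E(X,b)|>(1-\delta)|X|\}$ and its complement, on which $|E(X,b)|<\delta|X|$. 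Dually, since $E$ is symmetric (so $|E(a,Y)|=|E(Y,a)|$), $\delta$-goodness of $Y$ splits the ``rows'' $a\in X$ into the low set $X_0=\{a\in X:|E(a,Y)|<\delta|Y|\}$ and its complement, on which $|E(a,Y)|>(1-\delta)|Y|$. The point is that $Y_1$ and $X_0$ are \emph{exactly} the sets named in Proposition~\ref{prop:twosticks} once both of its threshold parameters are set to $\delta=\epsilon^2/4$.

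I would then split on the size of $Y_1$, since $|Y_1|\geq\alpha|Y|$ is precisely the hypothesis needed to invoke Proposition~\ref{prop:twosticks}; throughout I take $\alpha=\beta=\epsilon/2$. If $|Y_1|<\alpha|Y|$, then writing $d(X,Y)=\frac{1}{|X||Y|}\sum_{b\in Y}|E(X,b)|$ and bounding each high column by $|X|$ and each low column by $\delta|X|$ gives $d(X,Y)<\alpha+\delta=\frac{\epsilon}{2}+\frac{\epsilon^2}{4}\leq\epsilon$. This case uses only goodness of $X$.

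If instead $|Y_1|\geq\alpha|Y|$, I would apply Proposition~\ref{prop:twosticks} with both threshold parameters equal to $\epsilon^2/4$ and with $\alpha=\beta=\epsilon/2$. The size hypothesis is the case assumption, and the arithmetic constraint $\delta(1-\beta)\leq\alpha(\beta-\epsilon^2/4)$ holds—in fact with equality, which is exactly why the goodness bound is calibrated to $\epsilon^2/4$. The conclusion polarizes $X_0$: either $|X_0|<\frac{\epsilon}{2}|X|$ or $|X_0|>(1-\frac{\epsilon}{2})|X|$. I would then translate this back to density via the row decomposition $d(X,Y)=\frac{1}{|X||Y|}\sum_{a\in X}|E(a,Y)|$ together with goodness of $Y$. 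If almost all rows are low ($|X_0|>(1-\frac{\epsilon}{2})|X|$), bounding high rows by $|Y|$ and low rows by $\delta|Y|$ yields $d(X,Y)<\frac{\epsilon}{2}+\delta\leq\epsilon$; if almost all rows are high ($|X_0|<\frac{\epsilon}{2}|X|$), bounding the high rows below by $(1-\delta)|Y|$ yields $d(X,Y)>(1-\delta)(1-\frac{\epsilon}{2})\geq 1-\epsilon$. Either way the dichotomy holds, completing the case.

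The routine part is the three density estimates, each a one-line sum split followed by the inequality $\epsilon^2/4\leq\epsilon/2$ (valid since $\epsilon\leq 1$). The genuinely delicate point—and the thing I would pin down first—is the parameter bookkeeping for Proposition~\ref{prop:twosticks}: one must check that $Y_1$ and $X_0$ built from the two goodness hypotheses \emph{literally} coincide with the sets in that proposition under the substitution of $\epsilon^2/4$ for both of its thresholds, and that $\alpha=\beta=\epsilon/2$ meets the arithmetic constraint with equality. The easiest thing to overlook is the use of the symmetry of $E$, which is what lets $\delta$-goodness of $Y$ be read as a statement about the row-degrees $|E(a,Y)|$ that appear in $X_0$.
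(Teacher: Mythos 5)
Your proof is correct and follows essentially the same route as the paper's: the same sets $X_0$ and $Y_1$ with threshold $\epsilon^2/4$, the same case split on whether $|Y_1|\geq\frac{\epsilon}{2}|Y|$, and the same application of Proposition~\ref{prop:twosticks} with $\alpha=\beta=\epsilon/2$ (the paper's $\delta$) meeting the arithmetic constraint with equality. The only cosmetic difference is that in the final subcase you lower-bound the density directly via $(1-\epsilon^2/4)(1-\epsilon/2)\geq 1-\epsilon$ rather than invoking the symmetric calculation for $\neg E$.
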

\begin{proof}
Set $\delta=\epsilon/2$. Without loss of generality, we may assume $\epsilon\leq 1$, and thus $\delta^2+\delta\leq\epsilon$. Define the sets
\begin{align*}
X_0 &=\{a\in X:|E(a,Y)|<\delta^2|Y|\},\text{ and}\\
Y_1 &=\{b\in Y:|E(X,b)|>(1-\delta^2)|X|\}.
\end{align*}
Since $X$ and $Y$ are $\delta^2$-good, $|E(X,b)|<\delta^2|X|$ for any $b\in Y_0\coloneqq Y\backslash Y_1$, and $|E(a,Y)|>(1-\delta^2)|Y|$ for any $a\in X_1\coloneqq X\backslash X_0$. If $|Y_1|<\delta|Y|$ then 
\[
|E(X,Y)|=\sum_{b\in Y_0}|E(X,b)|+\sum_{b\in Y_1}|E(X,b)|<\delta^2|X||Y_0|+\delta|X||Y|\leq (\delta^2+\delta)|X||Y|,
\]
and so $d(X,Y)<\delta^2+\delta\leq\epsilon$. So we may assume that $|Y_1|\geq\delta|Y|$. We apply Proposition \ref{prop:twosticks} with $\alpha'=\beta'=\delta$ and $\delta'=\epsilon'=\delta^2$. In this case $\delta'(1-\beta')= \alpha'(\beta'-\epsilon')$. So we conclude $|X_0|<\delta|X|$ or $|X_0|>(1-\delta)|X|$. If $|X_0|>(1-\delta)|X|$ then $|X_1|<\delta|X|$, and we obtain $d(X,Y)<\epsilon$ by a calculation similar to the case above (where $|Y_1|<\delta|Y|$). Otherwise, if $|X_0|<\delta|X|$ then a similar calculation applied to $\neg E(x,y)$ yields $d(X,Y)>1-\epsilon$. 
\end{proof}

\begin{remark}
The previous arguments also establish a connection between goodness and the notion of excellence (also defined by Malliaris and Shelah \cite{MaShStab}). In particular, a subset $X$ of a finite graph $(V,E)$ is called \emph{$(\epsilon,\delta)$-excellent} if it is $\epsilon$-good and, moreover, for any $\delta$-good set $Y\seq V$, if $X_0=\{a\in X:|E(a,Y)|<\delta|Y|\}$, then either $|X_0|<\epsilon|X|$ or $|X_0|>(1-\epsilon)|X|$. Using Proposition \ref{prop:twosticks}, we can see that if $X\seq V$ is $\epsilon$-good (with $\epsilon<\frac{1}{2}$) then $X$ is $(\frac{\epsilon+2\delta}{1+2\delta},\delta)$-excellent for any $0<\delta<\frac{1}{2}$. So, for example, an $\epsilon$-good set is always $(3\epsilon,\epsilon)$-excellent.

To prove the claim above, suppose $X\seq V$ is $\epsilon$-good. Fix $\delta>0$ and suppose $Y\seq V$ is $\delta$-good. We will apply Proposition \ref{prop:twosticks} with $\alpha=\frac{1}{2}$ and $\beta=\frac{\epsilon+2\delta}{1+2\delta}$. Note $\delta(1-\beta)=\alpha(\beta-\epsilon)$. Let $X_0$ and $Y_1$ be as in the proposition. If $|Y_1|\geq\frac{1}{2}|Y|$ then we conclude $|X_0|<\beta|X|$ or $|X_0|>(1-\beta)|X|$, as desired. So suppose $|Y_1|<\frac{1}{2}|Y|$. Since $X$ is $\epsilon$-good and $Y$ is $\delta$-good, we have
\begin{align*}
X'_0\coloneqq X\backslash X_0=\{a\in X:|\neg E(a,Y)|<\delta|Y|\},\text{ and}\\
Y'_1\coloneqq Y\backslash Y_1=\{b\in Y:|\neg E(X,b)|>(1-\epsilon)|X|\}.
\end{align*}
Since $|Y'_1|>\frac{1}{2}|Y|$, Proposition \ref{prop:twosticks} (applied to $\neg E(x,y)$) yields $|X'_0|<\beta|X|$ or $|X'_0|>(1-\beta)|X|$, i.e., $|X_0|>(1-\beta)|X|$ or $|X_0|<\beta|X|$, as desired.
\end{remark}

\begin{remark}\label{rem:refinegood}
The following simple observation about good sets will be used later in a more complicated context, and thus is worth stating plainly now.

Let $(V,E)$ be a finite graph and  suppose $X\seq V$ is $\epsilon$-good in $(V,E)$. Let $Y$ be a nonempty subset of $X$ and set $\alpha=|Y|/|X|$. Then $Y$ is $\epsilon\alpha\inv$-good in $(V,E)$. Indeed, given $b\in V$, we either have $|E(X,b)|\leq \epsilon|X|$ in which case $|E(Y,b)|\leq |E(X,b)|\leq \epsilon|X|\leq \epsilon\alpha\inv |Y|$, or we have $|\neg E(X,b)|\leq \epsilon|X|$ in which case $|\neg E(Y,b)|\leq \epsilon\alpha\inv|Y|$.
\end{remark}

\subsection{Good decompositions in arbitrary stable graphs}

\begin{theorem}\label{thm:mainUP}
Let $T$ be a complete $\cL$-theory and let $\phi(x,y)$ be an $\cL$-formula defining a graph relation on models of $T$ (so $|x|=|y|=1$). Assume $\phi(x,y)$ is stable in $T$. Fix $M\models T$ and let $\mu$ be a Keisler $\phi$-measure over $M$. Then for all $\epsilon>0$ there is some $m\geq 1$ such that for all $\gamma>0$, there is a partition $M=X_0\cup X_1\cup\ldots\cup X_m$ such that each $X_i$ is $\phi$-definable, $\mu(X_0)<\epsilon$, and if $i\geq 1$ then $X_i$ is $\gamma$-good in $(M,\phi)$ with respect to $\mu$.
\end{theorem}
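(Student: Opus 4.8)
The plan is to build the partition out of small clopen neighborhoods of the ``heavy'' types of $\mu$, exploiting the decomposition of $\mu$ into point masses supplied by Theorem \ref{thm:KMstab}. Throughout I identify $\mu$ with the regular Borel measure $\hat\mu$ on $S_\phi(M)$, and identify a $\phi$-formula $\theta(x)$ over $M$ with the clopen set $[\theta(x)]\seq S_\phi(M)$ and with the $\phi$-definable subset of $M^x$ it defines; under this identification, if $X=[\theta_X]$ then $\mu(\phi(X,b))=\mu([\theta_X\wedge\phi(x,b)])$, so goodness of $X$ is a statement purely about these clopen sets.

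First I would fix $\epsilon>0$ and apply Theorem \ref{thm:KMstab} to write $\mu=\sum_{p\in D}\mu(p)\,p$, where $D=\{p:\mu(p)>0\}$ is countable and $\sum_{p\in D}\mu(p)=\mu(D)=1$. Since the partial sums converge to $1$, I can choose finitely many point masses $p_1,\ldots,p_m\in D$ with $\sum_{i=1}^m\mu(p_i)>1-\epsilon$. This determines $m$ in terms of $\epsilon$ and the fixed data $M,\mu$ alone, which is exactly what the quantifier structure of the statement requires: the dependence on $\gamma$ will enter only in a subsequent shrinking step. Set $c=\min_{i\leq m}\mu(p_i)>0$.

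The key observation, which I would isolate as the heart of the argument, is that a sufficiently small clopen neighborhood of a point mass is automatically good. Precisely: if $p$ is a point mass and $K\ni p$ is clopen with $\mu(K)<\mu(p)+\eta$, then $K$ is $\gamma$-good whenever $\eta<\gamma\mu(p)$. To see this, fix $b\in M^y$ and use that the complete type $p$ decides $\phi(x,b)$. If $\phi(x,b)\in p$, then $p\in[\phi(x,b)]\cap K$, whence $\mu(K\setminus[\phi(x,b)])\leq\mu(K)-\mu(p)<\eta$ and so $\mu(\phi(K,b))>\mu(K)-\eta>(1-\gamma)\mu(K)$; if $\neg\phi(x,b)\in p$, the symmetric estimate gives $\mu(\phi(K,b))<\eta<\gamma\mu(K)$. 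The inequalities only use $\mu(K)\geq\mu(p)\geq c$ and $\eta<\gamma c$, and Remark \ref{rem:meas-of-type} guarantees such $K$ exist since $\mu(p)=\inf\{\mu(K):K\text{ clopen},\ p\in K\}$.

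To assemble the partition for a given $\gamma>0$, I would set $\eta=\tfrac{1}{2}\gamma c$, separate the finitely many distinct point masses $p_1,\ldots,p_m$ by pairwise disjoint clopen sets $U_1,\ldots,U_m$ (possible since $S_\phi(M)$ is Hausdorff), and then, using Remark \ref{rem:meas-of-type}, choose clopen $K_i$ with $p_i\in K_i\seq U_i$ and $\mu(K_i)<\mu(p_i)+\eta$ (take a small neighborhood of $p_i$ and intersect it with $U_i$). Setting $X_i=K_i$ for $i\geq1$ and $X_0=M\setminus\bigcup_{i=1}^m X_i$ yields a partition into $\phi$-definable sets: the $X_i$ are pairwise disjoint because the $U_i$ are, each $X_i$ with $i\geq1$ is $\gamma$-good by the key observation, and $\mu(X_0)=1-\sum_{i=1}^m\mu(K_i)\leq1-\sum_{i=1}^m\mu(p_i)<\epsilon$. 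The only genuinely delicate point is arranging exact disjointness while keeping each piece a small definable neighborhood of its type; this is handled cleanly by Hausdorff separation followed by intersection, after which everything reduces to the routine measure estimates above.
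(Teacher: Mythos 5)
Your proof is correct and follows essentially the same route as the paper's: decompose $\mu$ into point masses via Theorem \ref{thm:KMstab}, capture all but $\epsilon$ of the mass with finitely many heavy types (fixing $m$ before $\gamma$, as the quantifiers require), and then for each $\gamma$ shrink pairwise disjoint clopen (hence $\phi$-definable) neighborhoods of those types using Remark \ref{rem:meas-of-type} so that each neighborhood is almost entirely its point mass, which forces $\gamma$-goodness because each complete $\phi$-type decides every instance $\phi(x,b)$. The only cosmetic difference is that you track the error additively ($\mu(K_i)<\mu(p_i)+\eta$ with $\eta<\gamma\mu(p_i)$) where the paper uses the equivalent multiplicative bound $\mu(K_i)<\mu(p_i)/(1-\gamma)$.
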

\begin{proof}
By Theorem \ref{thm:KMstab}, we can write $\mu=\sum_{i\in I}\alpha_ip_i$  where $I$ is an initial segment of $\Z^+$, each $p_i$ is in $S_\phi(M)$, and $\alpha_i=\mu(p_i)$. Note that $\sum_{i\in I}\alpha_i=1$. Now fix some $\epsilon>0$. Then there is some $m\in I$  such that $\sum_{i=1}^m\alpha_i>1-\epsilon$. Now fix some $\gamma>0$.  \medskip

\noindent\emph{Claim.} There is a partition $S_\phi(M)=K_0\cup K_1\cup\ldots\cup K_m$ such that each $K_i$ is clopen, $\mu(K_0)<\epsilon$, and for all $i\geq 1$, $p_i\in K_i$ and $\alpha_i>(1-\gamma)\mu(K_i)$. \medskip

\noindent\emph{Proof.} First, since $S_\phi(M)$ is a Stone space, we can find pairwise disjoint clopen sets $K_1,\ldots,K_m$ with $p_i\in X_i$ for all $i\geq 1$. By Remark \ref{rem:meas-of-type} and definition of $\alpha_i$, we may assume that  $\mu(K_i)<\alpha_i/(1-\gamma)$ for all $i\geq 1$ (without loss of generality, assume $\gamma<1$). Now let $K_0=S_\phi(M)\backslash (K_1\cup\ldots\cup K_m)$. Then $K_0$ is clopen and $K_0,K_1,\ldots,K_m$ partition $S_\phi(M)$. Since $K_0$ is disjoint from $\{p_1,\ldots,p_m\}$, we also have $\mu(K_0)\leq 1-\sum_{i=1}^m\alpha_i<\epsilon$.\claimqed\medskip

Let $K_i=[X_i]$  where $X_i\seq M$ is $\phi$-definable over $M$. Then we have a partition $M=X_0\cup X_1\cup\ldots\cup X_m$. Moreover, $\mu(X_0)<\epsilon$. To finish the proof, we fix $i\geq 1$ and show that $X_i$ is $\gamma$-good in $(M,\phi)$ with respect to $\mu$. So fix $b\in M$. Recall that $p_i$ contains $X_i$. So if $\phi(x,b)\in p_i$ then $\mu(\phi(X_i,b))\geq \mu(p_i)=\alpha_i>(1-\gamma)\mu(X_i)$. Otherwise, if $\neg \phi(x,b)\in p$ then $\mu(\neg\phi(X_i,b))>(1-\gamma)\mu(X_i)$ by the same calculation, and thus $\mu(\phi(X_i,b))<\gamma\mu(X_i)$. 
\end{proof}

\subsection{The pseudofinite normalized counting measure}\label{sec:pcm}

In this section, we briefly summarize one method for expanding an ultraproduct of finite structures to add the pseudofinite normalized counting measure to the language. 

Let $\cL_0$ be a one-sorted first-order language. We define a two-sorted language $\cL$ with sorts $S_h$ and $S_r$, the language $\cL_0$ on sort $S_h$, the language of ordered fields $\{+,\cdot,<,0,1\}$ on $S_r$, and for every $\cL_0$-formula $\phi(x,\ybar)$ with $|x|=1$, a $\ybar$-ary function symbol $f_\phi$ from $S_h^{\ybar}$ to $S_r$. 

Given a finite $\cL_0$-structure $M$, we define an $\cL$-structure $N$ as follows. First, the reduct of $N$ to $S_h$ is $M$, and the reduct of $N$ to $S_r$ is the real field $(\R,+,\cdot,<,0,1)$. Then we interpret each function symbol $f_\phi$ as the map $\bbar\mapsto |\phi(M_0,\bbar)|/|M_0|$. 

Now let $(M_i)_{i\in I}$ be a family of finite $\cL_0$-structures and let $\cU$ be an ultrafilter on $I$. Let $M=\prod_{\cU}M_i$ and $N=\prod_{\cU}N_i$, where $N_i$ is obtained from $M_i$ as above. Note that the reduct of $N$ to the $S_h$ sort is $M$, while the reduct of $N$ to the $S_r$ sort is the ultrapower $\cR\coloneqq (\R,+,\cdot,<,0,1)^\cU$. 

An element $x\in\mathcal{R}$ is \emph{finite} if $|x|<r$ for some $r\in\R$. The \emph{standard part} of a finite element $x\in \mathcal{R}$, denoted $\st(x)$, is  the unique $s\in\R$ such that $|x-s|<\epsilon$ for all $\epsilon\in\R^{>0}$. Given an $\cL_0$-formula $\phi(x,\ybar)$ a tuple $\bbar\in M^{\ybar}$, define $\mu(\phi(x,\bbar))=\st(f_\phi(\bbar))$. It is  straightforward to check that $\mu$ is a finitely additive probability measure on definable subsets of $M$. For example, to check finite additivity of $\mu$, fix  $\phi(x,\ybar)$ and $\psi(x,\ybar')$. Then each $N_i$ satisfies the sentence
\[
\forall \ybar\ybar'\big(f_{\phi\vee\psi}(\ybar\ybar')=f_{\phi}(\ybar)+f_{\psi}(\ybar')-f_{\phi\wedge\psi}(\ybar\ybar')\big).
\]
By {\L}o{\'s}'s Theorem, $N$ also satisfies this. Since the standard part map is a group homomorphism, it then follows that for any $\bbar\in M^{\ybar}$ and $\cbar\in M^{\ybar'}$, we have 
\[
\mu\big(\phi(x,\bbar)\vee\psi(x,\cbar)\big)=\mu\big(\phi(x,\bbar))+\mu(\psi(x,\cbar)\big)-\mu\big(\phi(x,\bbar)\wedge\psi(x,\cbar)\big).
\]

We call $\mu$ the \textbf{pseudofinite normalized counting measure on $M=\prod_{\cU}M_i$}. 

\begin{remark}
The reader familiar with ultralimits may verify that given an $\cL_0$-formula $\phi(x,\ybar)$ and  $\bbar$ from $M=\prod_{\cU}M_i$, if we choose a representative $(\bbar_i)_{i\in I}$ for $\bbar$ then $\mu(\phi(x,\bbar))=\lim_{\cU}|\phi(M_i,\bbar_i)|/|M_i|$.
\end{remark}

\subsection{Stable regularity in for finite graphs}

The goal of this subsection is to prove Theorem \ref{thm:maindown}. First, we start with a lemma that more directly reflects Theorem \ref{thm:mainUP}.

\begin{lemma}\label{lem:maindown}
Fix $k\geq 1$, $\epsilon>0$, and $\sigma\colon\N\to (0,1)$. Then for any finite $k$-stable graph $(V,E)$, there is a partition $V=X_0\cup X_1\cup\ldots\cup X_m$, with $m\leq O_{k,\epsilon,\sigma}(1)$, such that $|X_0|<\epsilon|V|$ and, for all $i\geq 1$, $X_i$ is $\sigma(m)$-good in $(V,E)$.
\end{lemma}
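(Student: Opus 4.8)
The plan is to deduce Lemma~\ref{lem:maindown} from Theorem~\ref{thm:mainUP} by a standard pseudofinite compactness argument. The key point is that Theorem~\ref{thm:mainUP} gives, for a single infinite stable graph equipped with a Keisler measure, a partition into an exceptional set of small measure together with finitely many good pieces, and the bound $m$ on the number of good pieces depends only on $\epsilon$ (and the formula). The role of the ultraproduct is to transfer this back to the finite world.

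First I would argue by contradiction: suppose the lemma fails for some fixed $k,\epsilon,\sigma$. Then for every $m\in\N$ there is a finite $k$-stable graph $(V_m,E_m)$ admitting \emph{no} partition $V_m=X_0\cup X_1\cup\cdots\cup X_{m'}$ with $m'\le m$, $|X_0|<\epsilon|V_m|$, and each $X_i$ ($i\ge1$) being $\sigma(m')$-good. (The subtlety here is that $\sigma$ is a \emph{function} of the number of pieces, so I must be careful that the goodness parameter demanded of each finite counterexample is $\sigma$ evaluated at the actual piece-count, not a fixed constant; I will return to this below.) Form the ultraproduct $M=\prod_{\cU}V_m$ over a nonprincipal ultrafilter $\cU$ on $\N$, working in the two-sorted language of Section~\ref{sec:pcm} so that $\phi(x,y)=E(x,y)$ defines a graph relation on $M$ and $\mu$, the pseudofinite normalized counting measure, is a Keisler $\phi$-measure over $M$. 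Since each $(V_m,E_m)$ is $k$-stable and $k$-stability is expressed by a single first-order sentence, {\L}o{\'s}'s theorem guarantees $\phi(x,y)$ is $k$-stable in $\Th(M)$, hence stable.

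Now I would apply Theorem~\ref{thm:mainUP} to $(M,\phi,\mu)$ with the same $\epsilon$: this produces an integer $m$ such that for \emph{all} $\gamma>0$ there is a $\phi$-definable partition $M=X_0\cup\cdots\cup X_m$ with $\mu(X_0)<\epsilon$ and each $X_i$ ($i\ge1$) being $\gamma$-good with respect to $\mu$. I would apply this with $\gamma=\sigma(m)$. The partition is given by $\phi$-formulas, and one can arrange the pieces to be defined by formulas over $M$; pulling these formulas back through {\L}o{\'s}'s theorem yields, for $\cU$-almost every $m'$, a finite partition $V_{m'}=X_0^{m'}\cup\cdots\cup X_m^{m'}$ of the same length $m+1$. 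The measure inequality $\mu(X_0)<\epsilon$ transfers to $|X_0^{m'}|<\epsilon|V_{m'}|$ (using $\mu(\theta)=\lim_{\cU}|\theta(V_{m'})|/|V_{m'}|$ and that strict inequality of standard parts transfers on a $\cU$-large set), and $\gamma$-goodness with respect to $\mu$ transfers to $\sigma(m)$-goodness in the finite graph $(V_{m'},E_{m'})$ by the same limit identity applied to $\mu(\phi(X_i,b))$ versus $\gamma\mu(X_i)$. Choosing any $m'\ge m$ in this $\cU$-large set and noting that the finite partition has exactly $m$ good pieces, so the relevant goodness parameter is $\sigma(m)$, directly contradicts the choice of $(V_{m'},E_{m'})$ as a counterexample.

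The main obstacle is the bookkeeping forced by the \emph{functional} error $\sigma(m)$ rather than a constant: I must ensure that the piece-count realized in the finite graph is the same $m$ fed into $\sigma$, so that ``$\sigma(m)$-good'' matches on both sides. Because the number $m$ produced by Theorem~\ref{thm:mainUP} is determined before $\gamma$ is chosen, I can first extract $m$, then set $\gamma=\sigma(m)$, and this resolves the circularity cleanly; the negation of the lemma must accordingly be set up so that the finite counterexample for index $m$ rules out partitions with at most $m$ good pieces at goodness level $\sigma$ of the \emph{actual} count, which is what transfers. A secondary technical point is converting the clopen/$\phi$-definable partition of $S_\phi(M)$ into genuinely $\phi$-definable subsets of $M$ whose defining formulas have bounded complexity (so that a single first-order sentence captures the whole partition for {\L}o{\'s}); this is routine since the partition is finite and each piece is $\phi$-definable by construction in Theorem~\ref{thm:mainUP}.
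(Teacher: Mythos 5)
Your proposal is correct and follows essentially the same route as the paper's proof: negate the lemma to get a sequence of finite counterexamples, form the ultraproduct with the two-sorted pseudofinite counting measure, apply Theorem~\ref{thm:mainUP} to extract $m$ before setting $\gamma=\sigma(m)$, and transfer the resulting definable partition back to a finite graph indexed by some $s\geq m$ via {\L}o{\'s}'s theorem. The subtleties you flag (the order of quantifiers handling the functional error, and encoding the partition and goodness conditions as first-order statements about the counting functions $f_\phi$) are exactly the points the paper addresses with its formulas $\chi_1,\chi_2,\chi_3^i$.
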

\begin{proof}
Toward a contradiction, suppose the lemma does not hold. Then we have fixed $k\geq 1$, $\epsilon>0$, and $\sigma\colon\N\to (0,1)$ such that for any integer $s\geq 1$, there is a finite $k$-stable graph $(V_s,E_s)$ that does not admit a partition  satisfying the conclusion of the lemma with $m\leq s$. 

 We view each graph $(V_s,E_s)$ as a structure $M_s$ in the  language $\cL_0=\{E\}$ of graphs. With this choice of $\cL_0$, let $\cL$ be the language defined in Section \ref{sec:pcm} and let $N_s$ be the two-sorted $\cL$-structure obtained from $M_s$ as described in that section. Fix a nonprincipal ultrafilter $\cU$ on $\Z^+$, and let $N=\prod_{\cU}N_s$. Let $\mu$ be the pseudofinite normalized counting measure on $M=\prod_{\cU}M_s$. 
 
 In order to have our notation match Section \ref{sec:prelim}, we let $\phi(x,y)$ denote the edge relation $E(x,y)$.
 We view $\mu$ as a Keisler $\phi$-measure by restricting it to $\phi$-formulas.

Note tha{\L}o{\'s}'s Theorem. So we can apply Theorem \ref{thm:mainUP} to $\phi(x,y)$ with respect to the measure $\mu$ and initial parameter $\epsilon$ (fixed at the start of the proof). This yields an integer $m\geq 1$ such that for any $\gamma>0$, we have a partition as in the conclusion of Theorem \ref{thm:mainUP}. Applying this to $\gamma=\sigma(m)$, we obtain $\phi$-definable sets $X_0,X_1,\ldots,X_m$ partitioning $M$ such that $\mu(X_0)<\epsilon$ and, for all $i\geq 1$, $X_i$ is $\gamma$-good in $(M,\phi)$ with respect to $\mu$.

Let $X_i$ be defined by $\psi_i(x,\cbar)$ for some $\cL_0$-formula $\psi_i(x,\zbar)$ (by adding dummy variables we may collect all necessary parameters into a single tuple $\cbar$ from $M$). Let $\theta_i(x;y,\zbar)$ denote $\phi(x,y)\wedge \psi_i(x,\zbar)$. We now define $\cL$-formulas that translate the salient properties of $X_0,X_1,\ldots,X_m$:
\begin{enumerate}
\item[$\ast$] $\chi_1(\zbar)$ says that $\psi_0(x,\zbar),\psi_1(x,\zbar),\ldots,\psi_m(x,\zbar)$ form a partition.
\item[$\ast$] $\chi_2(\zbar)$ says $f_{\psi_0}(\zbar)<\epsilon$.
\item[$\ast$] For $1\leq i\leq m$, $\chi^i_3(\zbar)$ says 
\[
\forall y\bigg(\big(f_{\theta_i}(y,\zbar)< \gamma f_{\psi_i}(\zbar)\big) \vee \big(f_{\theta_i}(y,\zbar)> (1-\gamma)f_{\psi_i}(\zbar)\big)\bigg).
\] 
\end{enumerate}
Let $\chi(\zbar)$ denote $\chi_1(\zbar)\wedge \chi_2(\zbar)\wedge\bigwedge_{i=1}^m\chi^{i}_3(\zbar)$. Then  $N\models\chi(\cbar)$. Let $\cbar=[(\cbar_s)_{s\geq 1}]_{\cU}$ where $\cbar_s$ is a tuple from $V_s$. Let $I=\{s\geq 1:N_s\models\chi(\cbar_s)\}$, and note that $I\in\cU$ by {\L}o{\'s}'s Theorem. Since $\cU$ is nonprincipal, there is some $s\in I$ such that $s\geq m$. 

Let $X^s_i=\psi_i(V_s,\cbar_s)$. Since $N_s\models\chi(\cbar_s)$, we have that $X^s_0,X^s_1,\ldots,X^s_m$ partition $V_s$, $|X^s_0|<\epsilon|V_s|$, and if $i\geq 1$ then $X^s_i$ is $\sigma(m)$-good in $(V_s,E_s)$. Since $s\geq m$, this contradicts the initial choice of $(V_s,E_s)$.
\end{proof}

Finally, we use the previous lemma to prove Theorem \ref{thm:maindown}. At this point, the proof becomes entirely finitary and combinatorial. We make use of Lemma \ref{lem:twosticks}, and also illustrate the leverage obtained by using functional error.

\begin{proof}[\textnormal{\textbf{Proof of Theorem \ref{thm:maindown}}}]
From the statement of theorem, we have fixed $k\geq 1$, $\epsilon>0$, and $\sigma\colon\N\to (0,1)$. Without loss of generality, assume $\sigma$ is decreasing. Define $\tau\colon \N\to (0,1)$ such that $\tau(m)=\epsilon\sigma(\lfloor 2m^2\epsilon\inv\rfloor)^2/8m$. Now let $(V,E)$ be a finite $k$-stable graph. We apply Lemma \ref{lem:maindown} to $(V,E)$ with parameters $k$, $\epsilon/2$, and $\tau$. This yields a partition $V=X_0\cup X_1\cup\ldots\cup X_m$, with $m\leq O_{k,\epsilon,\sigma}(1)$, such that $|X_0|<\epsilon|V|/2$ and, for all $i\geq 1$, $X_i$ is $\tau(m)$-good in $(V,E)$. Before applying Lemma \ref{lem:twosticks} to obtain almost homogeneous pairs, we first refine $X_0,X_1,\ldots,X_m$ into an equipartition. 

For each $1\leq i\leq m$, partition $X_i=X_{i,1}\cup\ldots\cup X_{i,t_i}\cup W_i$ so that $|X_{i,j}|=\lceil\frac{\epsilon}{2m}|V|\rceil$ for all $1\leq j\leq t_i$, and $|W_i|<\frac{\epsilon}{2m}|V|$ (we allow $t_i=0$ if $|X_i|<\frac{\epsilon}{2m}|V|$). Let $Y_1,\ldots,Y_n$ be an enumeration of $\{X_{i,j}:1\leq i\leq m,~1\leq j\leq t_i\}$, and set $Y_0=X_0\cup \bigcup_{i=1}^m W_i$. Then we have a partition $V=Y_0\cup Y_1\cup\ldots\cup Y_n$, which we will now show satisfies the conclusion of the theorem.

First we show $n\leq O_{k,\epsilon,\sigma}(1)$. For each $1\leq i\leq m$, we have
\[
|V|\geq|X_i|\geq\textstyle t_i\frac{\epsilon}{2m}|V|,
\]
and so $t_i\leq 2m\epsilon\inv$. Therefore $n= \sum_{i=1}^m t_i\leq 2m^2\epsilon\inv\leq O_{k,\epsilon,\sigma}(1)$.

Next, note that 
\[
\textstyle|Y_0|=|X_0|+\sum_{i=1}^m |W_i|<\frac{\epsilon}{2}|V|+m\frac{\epsilon}{2m}|V|=\epsilon|V|.
\]
Also note that if $i,j\geq 1$ then $|Y_i|=\lceil \frac{\epsilon}{2m}|V|\rceil=|Y_j|$ by construction. 

Finally, we show that if $1\leq i,j\leq n$ then either $d(Y_i,Y_j)<\sigma(n)$ or $d(Y_i,Y_j)>1-\sigma(n)$. By Lemma \ref{lem:twosticks}, it suffices to show that each $Y_t$ is $\sigma(n)^2/4$-good in $(V,E)$. So fix some $1\leq t\leq n$. Then $Y_t=X_{i,j}$ for some $1\leq i\leq m$ and $1\leq j\leq t_i$. Recall that $X_i$ is $\tau(m)$-good in $(V_s,E_s)$ and $|X_{i,j}|\geq \frac{\epsilon}{2m}|V|\geq \frac{\epsilon}{2m}|X_i|$. So by choice of $\tau$ and Remark \ref{rem:refinegood}, $X_{i,j}$ is $\sigma(\lfloor 2m^2\epsilon\inv\rfloor)^2/4$-good in $(V,E)$. Since $\sigma$ is decreasing and $n\leq \lfloor 2m^2\epsilon\inv\rfloor$, it follows that $X_{i,j}$ is $\sigma(n)^2/4$-good in $(V,E)$, as desired.
\end{proof}

\section{Further remarks}\label{sec:more}

\subsection{The Malliaris-Pillay structure theorem for stable formulas}\label{sec:MP}

In this section, we discuss the variation of  Theorem \ref{thm:mainUP} from \cite{MallPi}, which concerns  regularity for arbitrary stable formulas with respect to an arbitrary Keisler measure. In this context,  the most direct analogue of Theorem \ref{thm:maindown} would require a ``binary" measure on $\phi(x,y)$ in order to work with the notion of edge density $d(X,Y)$. Such an analogue can be obtained using the fact that Keisler measures obtained from stable formulas admit well-defined ``Morley products" satisfying Fubini; see \cite[Section 2.5]{ChStNIP} and \cite{GanStab}. Malliaris and Pillay take a different approach, and work with a uniform variation on the notion of good pairs. To motivate this, let us summarize the various (mostly equivalent) forms of ``strong regularity" for pairs of subsets in a finite graph.

Let $(V,E)$ be a finite graph and let $X,Y\seq V$ be nonempty sets. We define/recall some terminology.
\begin{enumerate}
\item[$(i)$] $(X,Y)$ is \emph{$\epsilon$-homogeneous} if $d(X,Y)<\epsilon$ or $d(X,Y)>1-\epsilon$.
\item[$(ii)$] $(X,Y)$ is \emph{$\epsilon$-good} if for all $a\in X$, $|E(a,Y)|<\epsilon|Y|$ or $|E(a,Y)|>(1-\epsilon)|Y|$, and for all $b\in Y$, $|E(X,b)|<\epsilon|X|$ or $|E(X,b)|>(1-\epsilon)|X|$.
\item[$(iii)$] $(X,Y)$ is \emph{$\epsilon$-special} if there are $X'\seq X$ and $Y'\seq Y$ such that $|X'|>(1-\epsilon)|X|$, $|Y'|>(1-\epsilon)|Y|$, and either:
\begin{enumerate}
\item[$\ast$] for all $a\in X'$ and $b\in Y'$, $|E(a,Y)|<\epsilon|Y|$ and $|E(X,b)|<\epsilon|X|$, or
\item[$\ast$] for all $a\in X'$ and $b\in Y'$, $|E(a,Y)|>(1-\epsilon)|Y|$ and $|E(X,b)|>(1-\epsilon)|X|$.
\end{enumerate}
\end{enumerate}
Using calculations similar to the proof of Lemma \ref{lem:twosticks}, one can establish the following relationships between these notions. (Note that item (3) is Lemma \ref{lem:twosticks}.)
\begin{enumerate}
\item[(1)] If $(X,Y)$ is $\epsilon$-homogeneous then it is $\sqrt{\epsilon}$-special.
\item[(2)] If $(X,Y)$ is $\epsilon$-special then it is $2\epsilon$-homogeneous.
\item[(3)] If $(X,Y)$ is $\epsilon$-good then it is $2\sqrt{\epsilon}$-homogeneous.
\end{enumerate}
There is obviously no similar implication from homogeneous/special to good. However, one could weaken condition $(ii)$ so that, as in $(iii)$, the universal quantifiers are over large sets $X'\seq X$ and $Y'\seq Y$; call this ``almost $\epsilon$-good". Then clearly any $\epsilon$-special pair is almost $\epsilon$-good. Moreover, is not hard to adjust the proof of Lemma \ref{lem:twosticks} to show that an almost $\epsilon$-good pair is $\epsilon'$-homogeneous for appropriately chosen $\epsilon'$ (in fact, the ``symmetry lemma" given in \cite[Lemma 5.9]{TWgraphs} is phrased using almost good pairs). 

The version of stable regularity proved by Malliaris and Pillay \cite{MallPi} is stated in terms of special\footnote{We note that the terminology ``special" does not appear in \cite{MallPi}. We have chosen to use it here for  easier exposition (and for lack of a better term).}  pairs, but of course in the context of arbitrary Keisler measures. Note that a general formula $\phi(x,y)$  is more naturally treated as a bipartite graph, and indeed most model-theoretic regularity results are given in the bipartite setting (e.g., \cite{MallPi,ChSt,ChStNIP,CCP}).  This necessitates working with two measures, one for $\phi$ and the other for $\phi^*$. So altogether we need to work with the following data: a model $M\models T$, a formula $\phi(x,y)$, a Keisler $\phi$-measure $\mu$, a Keisler $\phi^*$-measure $\nu$, a $\phi$-definable set $X\seq M^x$, and a $\phi^*$-definable set $Y\seq M^y$. Then we say that the pair $(X,Y)$ is \emph{$\epsilon$-special with respect to $\mu$ and $\nu$} if the obvious analogue of condition $(iii)$ holds, with the extra assumption that $X'$ is $\phi$-definable and $Y'$ is $\phi^*$-definable.

We  now state and prove what is essentially the main result from \cite{MallPi}. We only make one  improvement, namely, we  adjust the order of parameters in order to accommodate the ``functional error" regime (specifically, the parameter $\gamma$ is allowed to depend on the sizes of the partitions of $M^x$ and $M^y$).

\begin{theorem}\label{thm:MP}
Let $T$ be a first-order $\cL$-theory and let $\phi(x,y)$ be an $\cL$-formula. Assume $\phi(x,y)$ is stable in $T$ and fix a model $M\models T$. Let $\mu$ be a Keisler $\phi$-measure over $M$, and $\nu$ a Keisler $\phi^*$-measure over $M$. Then for all $\epsilon>0$, there are $m,n\geq 1$ such that for all $\gamma>0$, there are partitions $M^x=X_0\cup X_1\cup\ldots\cup X_n$ and $M^y=Y_0\cup Y_1\cup\ldots\cup Y_n$ such that each $X_i$ is $\phi$-definable, each $Y_i$ is $\phi^*$-definable, $\mu(X_0)<\epsilon$, $\mu(Y_0)<\epsilon$, and if $i,j\geq 1$ then $(X_i,X_j)$ is $\gamma$-special with respect to $\mu$ and $\nu$. 
\end{theorem}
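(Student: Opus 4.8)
The plan is to run the proof of Theorem~\ref{thm:mainUP} twice in parallel --- once for $\phi$ with the measure $\mu$, and once for the dual formula $\phi^*$ with the measure $\nu$ --- and then glue the two decompositions together using a symmetry property of definable types. Concretely, I would first apply Theorem~\ref{thm:KMstab} to write $\mu=\sum_i\alpha_ip_i$ with $p_i\in S_\phi(M)$ and $\nu=\sum_j\beta_jq_j$ with $q_j\in S_{\phi^*}(M)$. Given $\epsilon>0$, choose $m$ and $n$ with $\sum_{i=1}^m\alpha_i>1-\epsilon$ and $\sum_{j=1}^n\beta_j>1-\epsilon$ (possible since $\sum_i\alpha_i=\sum_j\beta_j=1$). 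Then, exactly as in the Claim inside Theorem~\ref{thm:mainUP}, for each $\gamma>0$ I would build clopen partitions of $S_\phi(M)$ and $S_{\phi^*}(M)$ refining the point masses, yielding $\phi$-definable sets $X_0,\ldots,X_m$ partitioning $M^x$ and $\phi^*$-definable sets $Y_0,\ldots,Y_n$ partitioning $M^y$, with $\mu(X_0),\nu(Y_0)<\epsilon$, such that each $X_i$ ($i\geq 1$) is $\gamma$-good for $(M,\phi)$ with respect to $\mu$ and each $Y_j$ ($j\geq 1$) is $\gamma$-good for $(M,\phi^*)$ with respect to $\nu$; moreover the goodness of $X_i$ is witnessed by the point mass $p_i\in X_i$ and that of $Y_j$ by $q_j\in Y_j$, with $\alpha_i>(1-\gamma)\mu(X_i)$ and $\beta_j>(1-\gamma)\nu(Y_j)$ as arranged in the Claim.

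The next step is to reduce $\gamma$-specialness of a pair $(X_i,Y_j)$ to a purely type-theoretic statement. By Theorem~\ref{thm:stabchar}$(iii)$, applied to $\phi$ and to $\phi^*$, fix a $\phi^*$-formula $\psi_i(y)$ over $M$ defining $p_i$ and a $\phi$-formula $\chi_j(x)$ over $M$ defining $q_j$, so that $\phi(x,b)\in p_i$ iff $M\models\psi_i(b)$, and $\phi(a,y)\in q_j$ iff $M\models\chi_j(a)$. Because $p_i$ witnesses goodness of $X_i$, for $b\in M^y$ the quantity $\mu(\phi(X_i,b))$ is $>(1-\gamma)\mu(X_i)$ when $M\models\psi_i(b)$ and $<\gamma\mu(X_i)$ otherwise; dually $\nu(\phi(a,Y_j))$ is large or small according to whether $M\models\chi_j(a)$. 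Thus the measure-theoretic ``$a$-condition'' and ``$b$-condition'' in the definition of $\gamma$-special are governed, respectively, by $[\chi_j]$ and $[\psi_i]$. Since $p_i$ decides the $\phi$-formula $\chi_j$ and $q_j$ decides the $\phi^*$-formula $\psi_i$, I would set $X_i'=X_i\cap[\chi_j]$ or $X_i\cap[\neg\chi_j]$ according to which lies in $p_i$, and $Y_j'=Y_j\cap[\psi_i]$ or $Y_j\cap[\neg\psi_i]$ according to $q_j$. Each is large in the correct sense: if, say, $\chi_j\in p_i$ then $p_i$ contains both $X_i$ and $\chi_j$, so $\mu(X_i')\geq\mu(p_i)=\alpha_i>(1-\gamma)\mu(X_i)$, and similarly for $Y_j'$.

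What remains --- and this is the crux --- is to ensure the two chosen signs agree, so that the pair falls into a single one of the two alternatives of $\gamma$-specialness rather than mixing ``large'' on one side with ``small'' on the other. This is precisely the assertion
\[
\chi_j\in p_i \iff \psi_i\in q_j,
\]
the \emph{symmetry of $\phi$-definitions} for a stable formula. I expect this to be the main obstacle, since it is the one place where the full strength of stability (beyond mere definability and counting of types) is needed. I would prove it by passing to an $|M|^+$-saturated extension $N\succ M$: realize $q_j$ by $b\in N$, then realize the definable extension of $p_i$ to $Mb$ (given by its definition $\psi_i$) by some $a$, so that $N\models\phi(a,b)\iff N\models\psi_i(b)\iff\psi_i\in q_j$. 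Using symmetry of nonforking for the stable formula $\phi$ --- equivalently, that the definable extension is the unique nonforking extension and $\phi$-independence is symmetric --- one gets that $\tp_{\phi^*}(b/Ma)$ is the definable extension of $q_j$, whence $N\models\phi(a,b)\iff N\models\chi_j(a)\iff\chi_j\in p_i$; combining the two equivalences gives the display. (Alternatively, the symmetry can be read off a Fubini computation for the Morley product $\mu\otimes\nu$, but that is exactly the machinery this approach is meant to avoid.)

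Finally, with the symmetry in hand the two sign choices coincide, so $(X_i,Y_j)$ satisfies the ``large'' alternative of $\gamma$-specialness when $\chi_j\in p_i$ and the ``small'' alternative otherwise, witnessed by the definable sets $X_i',Y_j'$ constructed above. As this holds for all $i,j\geq 1$, the partitions $X_0,\ldots,X_m$ and $Y_0,\ldots,Y_n$ satisfy the conclusion of the theorem.
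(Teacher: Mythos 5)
Your proposal follows the same route as the paper's proof: run the decomposition of Theorem~\ref{thm:mainUP} for $\mu$ and for $\nu$ separately, reduce $\gamma$-specialness of $(X_i,Y_j)$ to the sign agreement $\chi_j\in p_i\iff\psi_i\in q_j$ between the defining formulas of the witnessing point masses, and conclude. You have also correctly isolated the crux: that sign agreement is exactly Harrington's Lemma, which is the one genuinely new ingredient beyond Theorem~\ref{thm:mainUP}. The only real divergence is how you prove it. The paper (Remark~\ref{rem:HL}) gives a direct, self-contained argument entirely inside $M$: assuming the signs disagree, one alternately realizes the two types over the finitely many parameters chosen so far (using finite satisfiability over the model $M$) to build sequences $(a_i)$, $(b_i)$ with $\phi(a_i,b_j)\iff i\leq j$, contradicting stability. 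You instead pass to a saturated extension and invoke symmetry of nonforking (equivalently, uniqueness of the definable extension) for the stable formula $\phi$. That is correct, but it imports machinery the paper deliberately does not develop, and the standard proof of that symmetry is itself essentially the same ladder construction --- so in the self-contained setting of this paper your route would end up reproving Harrington's Lemma anyway. If you replace the appeal to nonforking symmetry with the two-sequence argument (which needs nothing beyond finite satisfiability of types over a model and $k$-stability), your proof coincides with the paper's.
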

\begin{proof}
We start with the same argument as in the proof of Theorem \ref{thm:mainUP}, but applied to both $\mu$ and $\nu$. After fixing $\epsilon>0$ and applying the same claim, this yields a partition $M^x=X_0\cup X_1\cup\ldots \cup X_m$ into $\phi$-definable sets and a partition $M^y=Y_0\cup Y_1\cup\ldots\cup Y_n$ into $\phi^*$-definable sets such $\mu(X_0)<\epsilon$, $\nu(Y_0)<\epsilon$ and for $i,j\geq 1$, $X_i$ contains a type $p_i\in S_\phi(M)$ with $\mu(p_i)>(1-\gamma)\mu(X_i)$, and $Y_j$ contains a type $q_j\in S_{\phi^*}(M)$ with $\nu(q_j)>(1-\gamma)\nu(Y_j)$. 

Now fix $i,j\geq 1$. We show that $(X_i,X_j)$ is $\gamma$-special with respect to $\mu$ and $\nu$. To streamline the argument, we use the following notation. Given an arbitrary formula $\psi$, let $\psi^1$ and $\psi^0$ denote $\psi$ and $\neg\psi$, respectively.

 Let $\psi(y)$ be a $\phi^*$-formula over $M$ defining $p_i$, and let $\theta(x)$ be a $\phi$-formula over $M$ defining $q_j$. Choose $t\in\{0,1\}$ so that $\psi^t(y)\in q_j$. By Harrington's Lemma (see Remark \ref{rem:HL} below), we also have $\theta^t(x)\in p_i$. Let $X'=\theta^t(X_i)$ and $Y'=\psi^t(Y_j)$. Then $X'$ is in $p_i$ so $\mu(X')\geq\mu(p_i)>(1-\gamma)\mu(X_i)$, and $Y'$ is in $q_j$ so $\nu(Y')\geq\nu(q_j)>(1-\gamma)\nu(Y_j)$. Finally, suppose $b\in Y'$. Then $M\models\psi^t(b)$, and so $\phi^t(x,b)\in p_i$. So $p_i$ contains $\phi^t(X_i,b)$, and thus $\mu(\phi^t(X_i,b))>(1-\gamma)\mu(X_i)$ as above. Similarly, we have $\nu(\phi^t(a,Y_j))>(1-\gamma)\nu(Y_j)$ for all $a\in X'$. Altogether, $X$ and $Y'$ witness that $(X_i,Y_j)$ is $\gamma$-special with respect to $\mu$ and $\nu$.
\end{proof}

\begin{remark}\label{rem:HL}
We recall \emph{Harrington's Lemma}, which was used in the proof of the previous theorem. Suppose $\phi(x,y)$ is stable in $T$ and fix $M\models T$. Given $p\in S_\phi(M)$ and $q\in S_{\phi^*}(M)$, let $\psi(y)$ be a $\phi^*$-formula over $M$ defining $p$ and let $\theta(x)$ be a $\phi$-formula over $M$ defining $q$. Then Harrington's Lemma asserts that $\psi(y)\in q$ if and only if $\theta(x)\in p$. Indeed, suppose this fails and (without loss of generality), assume we have $\neg\psi(y)\in q$ and $\theta(x)\in p$. Inductively construct sequences $(a_i)_{i<\omega}$ from $M^x$ and $(b_i)_{i<\omega}$ from $M^y$ such that
\[
a_{i+1}\models \{\theta(x)\}\cup p|_{b_1,\ldots,b_i}\text{ and }b_{i+1}\models\{\neg\psi(y)\}\cup q|_{a_1,\ldots,a_{i+1}}.
\]
Then by construction, we have $M\models\phi(a_i,b_j)$ if and only if $i\leq j$, which is a contradiction.
\end{remark}

\subsection{Removing the exceptional set in groups}\label{sec:finsum}

Recall that all of our previous regularity results involve irregular error controlled by a function of the size of the partition (rather than  constant error $\epsilon>0$). The cost of doing this is that we need to allow for an exceptional set of vertices of small constant measure. This exceptional set originates in the proof of Theorem \ref{thm:mainUP} when approximating a Keisler $\phi$-measure (given by a possibly infinite weighted sum of types) by a finite sum of types. Therefore, if one were to somehow know that the initial Keisler $\phi$-measure were already a finite sum of types, then the exceptional set would no longer be necessary. This raises an interesting question of when a family of finite (stable) graphs has the property that, for any ultraproduct of members in the family, the pseudofinite normalized counting measure  is a \emph{finite} sum of types.  We will now briefly discuss an important example in which this is the case, namely, stable subsets of groups.

Let $G$ be a group and suppose $A\seq G$ is an arbitrary subset. Then we say that $A$ is \textbf{stable in $T$} if the formula $A(x\cdot y)$ is stable in $\Th(G,\cdot,A)$. In \cite{TeWo,TeWo2},  the second author and Wolf proved an  ``arithmetic" regularity lemma for stable subsets of finite abelian groups. When compared to Green's \cite{GreenSLAG} general arithmetic regularity lemma for arbitrary subsets of finite abelian groups, their result yields stronger conclusions for stable sets analogous to the comparison between Malliaris-Shelah and Szemer\'{e}di. In the interim between \cite{TeWo} and \cite{TeWo2}, a pseudofinite proof for stable subsets of arbitrary finite groups was proved by the authors and Pillay \cite{CPT}. Let us now state (but not prove) that result.

\begin{theorem}
Suppose $G$ is a finite group and $A\seq G$ is $k$-stable. Then for any function $\sigma\colon \N\to (0,1)$, there is a normal subgroup $H\leq G$ of index $n\leq O_{k,\sigma}(1)$, such that for any $g\in G$, either $|A\cap gH|<\sigma(n)|H|$ or $|A\cap gH|>(1-\sigma(n))|H|$.
\end{theorem}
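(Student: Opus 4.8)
The plan is to prove the group regularity theorem by the same pseudofinite strategy that drove Theorem~\ref{thm:maindown}, but exploiting the extra algebraic structure to eliminate the exceptional set $Y_0$. As suggested in the surrounding discussion, the key observation is that for stable subsets of groups, the relevant pseudofinite Keisler measure is a \emph{finite} sum of types, so no approximation error (and hence no exceptional set) is incurred. First I would set up the ultraproduct: suppose the theorem fails, so there is a fixed $k$ and $\sigma\colon\N\to(0,1)$ and a sequence of finite groups $G_s$ with $k$-stable subsets $A_s\seq G_s$ admitting no normal subgroup $H$ of index $\leq s$ with the desired homogeneity. Form the ultraproduct $G=\prod_\cU G_s$ with distinguished subset $A=\prod_\cU A_s$, working in the language of groups expanded by a predicate for $A$ and by the pseudofinite counting measure as in Section~\ref{sec:pcm}. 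The formula $\phi(x,y)\coloneqq A(x\cdot y)$ is stable in $\Th(G,\cdot,A)$ by hypothesis, and $\mu$ is a left-invariant Keisler $\phi$-measure (left-invariance being the algebraic input, inherited from invariance of normalized counting measure under group translation and preserved by {\L}o{\'s} and standard part).

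The heart of the argument is that a \emph{left-invariant} stable Keisler $\phi$-measure is automatically a single type concentrated on a definable subgroup, or more precisely concentrates on finitely many cosets of a definable finite-index subgroup. Concretely, I would invoke Theorem~\ref{thm:KMstab} to write $\mu=\sum_{p\in D}\mu(p)\,p$ as a countable weighted sum of point masses. The next step is to show that left-invariance forces the support $D$ to be finite and to have a group-theoretic meaning. The standard mechanism is the \emph{stabilizer} of $\mu$: the set $H=\{g\in G : g\cdot\mu=\mu\}$ (where $g\cdot\mu$ is the pushforward under left translation by $g$) is a subgroup, it is type-definable (in fact definable, using that $\mu$ is a finite sum of types of equal maximal weight), and it has finite index because the distinct translates $g\cdot\mu$ correspond to distinct measures, of which there can be only finitely many once one knows $\mu$ is supported on finitely many maximal-weight point masses permuted by translation. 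The point masses of maximal weight are precisely the cosets of (the type-definable version of) $H$, and $\mu$ is the uniform average over them. Once $H$ is a definable finite-index subgroup of $G$, one extracts a \emph{normal} finite-index subgroup by intersecting finitely many conjugates (finite-index subgroups have finitely many conjugates), and the homogeneity of $A$ on each coset $gH$ follows from goodness of the cosets, exactly as in Theorem~\ref{thm:mainUP}: since $\mu$ is concentrated with no leftover mass, each coset is $\gamma$-good for every $\gamma>0$, i.e.\ $A$ meets each coset in measure either $<\gamma$ or $>1-\gamma$.

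The final step is the pull-back through {\L}o{\'s}'s Theorem. Having obtained in $G$ a definable normal finite-index subgroup $H$ of index $n$ with the stated coset-homogeneity for $\mu$, I would express all of the relevant data---normality, index exactly $n$, and the coset density dichotomy with error $\sigma(n)$---as a first-order $\cL$-formula $\chi(\cbar)$ in the parameters defining $H$, using the measure-valued function symbols $f_\phi$ as in the proof of Lemma~\ref{lem:maindown}. Then $N\models\chi(\cbar)$, so the witness set $I=\{s : N_s\models\chi(\cbar_s)\}$ lies in $\cU$, and since $\cU$ is nonprincipal we may choose $s\in I$ with $s\geq n$. This produces in the genuine finite group $G_s$ a normal subgroup $H_s$ of index $n\leq s$ with the required homogeneity of $A_s$ on its cosets, contradicting the choice of $G_s$.

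I expect the main obstacle to be the middle step: showing that left-invariance of a stable Keisler measure forces it to be a finite uniform average over cosets of a \emph{definable} finite-index subgroup. The subtlety is that the stabilizer $H$ is a priori only type-definable, and one must argue that its finite index makes it (relatively) definable so that the data can be captured by a single first-order formula suitable for the {\L}o{\'s} pull-back; additionally one must verify that the maximal-weight point masses really are permuted transitively enough to form cosets, rather than merely being permuted among themselves by translation. This is where one genuinely uses stability (to guarantee the measure decomposes into point masses at all, via Theorem~\ref{thm:KMstab}) together with the group structure, and it is the conceptual content behind why the exceptional set disappears in the group setting.
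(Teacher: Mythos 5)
First, a point of comparison: the paper does not prove this theorem. It is stated explicitly ``(but not prove[d])'' in Section~\ref{sec:finsum}, with the proof deferred to \cite{CPT}; the paper only sketches the reason the exceptional set vanishes, namely that for a left-invariant stable formula the set of \emph{generic} types in $S_\phi(G)$ is finite and nonempty, and the unique left-invariant Keisler $\phi$-measure is the average over them. Your outer architecture (ultraproduct of a putative sequence of counterexamples, pseudofinite counting measure, decomposition of the measure into types, {\L}o{\'s} pullback of a first-order formula recording normality, index, and the coset density dichotomy) matches that sketch and the proof of Lemma~\ref{lem:maindown}, and the final transfer step is fine.

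The genuine gap is the middle step, which you correctly flag as the obstacle but do not close, and your proposed mechanism (the stabilizer of $\mu$) does not close it. Concretely: (1) Theorem~\ref{thm:KMstab} only gives a \emph{countable} sum $\mu=\sum_{p\in D}\mu(p)p$, and the observation that left translation permutes each weight class $D_\alpha$ does not rule out, say, infinitely many translation-\emph{fixed} types of weights $1/2,1/4,1/8,\dots$; so finiteness of the support does not follow from the permutation argument alone. (2) Even granting a maximal weight class, you need the measure to concentrate entirely on it, and you need the action on it to be transitive so that the point masses really correspond to the cosets of a single finite-index subgroup --- you acknowledge both but supply no argument. (3) The facts that actually do this work are local stable group theory: the $\phi$-connected component $G^0_\phi$ is a \emph{finite} intersection of $\phi$-definable finite-index subgroups (a Baldwin--Saxl-type chain condition), hence definable of finite index; generic types exist and are in bijection with $G/G^0_\phi$; and any left-invariant Keisler $\phi$-measure annihilates non-generic formulas and hence equals the uniform average over the generics. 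None of this is developed in the present paper, and it cannot be replaced by Theorem~\ref{thm:KMstab} plus invariance. A secondary technical point: with $\phi(x,y)=A(x\cdot y)$ the left translate of an instance $A(x\cdot b)$ is $A(gx\cdot b)$, which is not again a $\phi$-instance, so to even speak of the left-translation action on $S_\phi(G)$ and of left-invariance of $\mu$ as a Keisler $\phi$-measure you should work with $A(y\cdot x)$ (or with right translation); as written the action is not well defined on the type space you are using.
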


Note that in the previous theorem the subgroup $H$ provides a partition of $G$ into cosets, and the conclusion implies almost homogeneity  for all pairs of cosets with respect to the (bipartite) graph $x\in yA$ (see \cite[Proposiiton 3.4]{CPT}). Thus in this case there is no need for the exception set of constant size. The reason is that, after moving to the suitable model-theoretic context, the relevant Keisler measure is a \emph{finite} sum of types. In particular,  if $G$ is a first-order expansion of a group and $\phi(x,\ybar)$ is a left-invariant stable formula (with $x$ a singleton), then the set $X\seq S_\phi(G)$ of generic types is finite and nonempty. Moreover, there is a unique left-invariant Keisler $\phi$-measure over $G$, which is precisely the average over $X$.  We refer the reader to \cite{CPT} for the relevant definitions, and the proof of the above theorem.\footnote{The result is not stated with functional error in \cite{CPT}, however functional error is evident from (and used explicitly in) the proof.} As usual, the pseudofinite proof provides no explicit bounds, unlike the results from \cite{TeWo,TeWo2} in the abelian case. A finitary proof for arbitrary finite groups, with quantitative bounds improving on the abelian case, was later given by the first author in \cite{CoQSAR}.


\begin{thebibliography}{10}

\bibitem{BBW}
J. Balogh, B. Bollob\'{a}s, and D. Weinreich, \emph{The speed of hereditary
  properties of graphs}, J. Combin. Theory Ser. B \textbf{79} (2000), no.~2,
  131--156. 

\bibitem{CCP}
N. Chavarria, G. Conant, and A. Pillay, \emph{Continuous stable regularity},
  arXiv:2111.05435, 2021.

\bibitem{ChSt}
A. Chernikov and S. Starchenko, \emph{Regularity lemma for distal structures},
  J. Eur. Math. Soc. (JEMS) \textbf{20} (2018), no.~10, 2437--2466.

\bibitem{ChStNIP}
A. Chernikov and S. Starchenko, \emph{Definable regularity lemmas for {NIP} hypergraphs}, Q. J. Math.
  \textbf{72} (2021), no.~4, 1401--1433. 

\bibitem{CPT}
G. Conant, A. Pillay, and C. Terry, \emph{A group version of stable
  regularity}, Math. Proc. Cambridge Philos. Soc. \textbf{168} (2020), no.~2,
  405--413. 

\bibitem{CPTNIP}
G. Conant, A. Pillay, and C. Terry, \emph{Structure and regularity for subsets of groups with finite
  {VC}-dimension}, J. Eur. Math. Soc. (JEMS) \textbf{24} (2022), no.~2,
  583--621. 

\bibitem{CoQSAR}
G. Conant, \emph{Quantitative structure of stable sets in arbitrary finite
  groups}, Proc. Amer. Math. Soc. \textbf{149} (2021), no.~9, 4015--4028.

  \bibitem{Fremv4}
D.~H. Fremlin, \emph{Measure theory. {V}ol. 4}, Torres Fremlin, Colchester,
  2006, Topological measure spaces. Part I, II, Corrected second printing of
  the 2003 original. 

\bibitem{GanStab}
K. Gannon, \emph{Measures and stability in a model}, research note (2019), available online:
  \url{kylegannon.com}.

\bibitem{GowSRL}
W.~T. Gowers, \emph{Lower bounds of tower type for {S}zemer\'{e}di's uniformity
  lemma}, Geom. Funct. Anal. \textbf{7} (1997), no.~2, 322--337. 

\bibitem{GreenSLAG}
B. Green, \emph{A {S}zemer\'edi-type regularity lemma in abelian groups, with
  applications}, Geom. Funct. Anal. \textbf{15} (2005), no.~2, 340--376.

\bibitem{Hodtrees}
W. Hodges, \emph{Encoding orders and trees in binary relations}, Mathematika
  \textbf{28} (1981), no.~1, 67--71. 

\bibitem{HrPiGLF}
E. Hrushovski and A. Pillay, \emph{Groups definable in local fields and
  pseudo-finite fields}, Israel J. Math. \textbf{85} (1994), no.~1-3, 203--262.

\bibitem{Keis}
H.~J. Keisler, \emph{Measures and forking}, Ann. Pure Appl. Logic \textbf{34}
  (1987), no.~2, 119--169. 

\bibitem{MaShStab}
M. Malliaris and S. Shelah, \emph{Regularity lemmas for stable graphs}, Trans.
  Amer. Math. Soc. \textbf{366} (2014), no.~3, 1551--1585. 

\bibitem{MallPi}
M. Malliaris and A. Pillay, \emph{The stable regularity lemma revisited}, Proc.
  Amer. Math. Soc. \textbf{144} (2016), no.~4, 1761--1765. 

\bibitem{Pibook}
A. Pillay, \emph{An introduction to stability theory}, Oxford Logic Guides,
  vol.~8, The Clarendon Press Oxford University Press, New York, 1983.
  
  \bibitem{PiDR}
A. Pillay, \emph{Domination and regularity}, Bull. Symb. Log. \textbf{26}
  (2020), no.~2, 103--117. 



\bibitem{Shbook}
S. Shelah, \emph{Classification theory and the number of nonisomorphic models},
  second ed., Studies in Logic and the Foundations of Mathematics, vol.~92,
  North-Holland Publishing Co., Amsterdam, 1990. 
  
  \bibitem{Sibook}
P. Simon, \emph{A guide to {NIP} theories}, Lecture Notes in Logic, vol.~44,
  Association for Symbolic Logic, Chicago, IL; Cambridge Scientific Publishers,
  Cambridge, 2015. 

\bibitem{TeWo}
C. Terry and J. Wolf, \emph{Stable arithmetic regularity in the finite field
  model}, Bull. Lond. Math. Soc. \textbf{51} (2019), no.~1, 70--88.

\bibitem{TeWo2}
C. Terry and J. Wolf, \emph{Quantitative structure of stable sets in finite abelian groups},
  Trans. Amer. Math. Soc. \textbf{373} (2020), no.~6, 3885--3903. 

\bibitem{TWgraphs}
C. Terry and J. Wolf, \emph{Irregular triads in 3-uniform hypergraphs}, arXiv:2111.01737,
  2021.

\end{thebibliography}
\end{document}